\documentclass[12pt]{article}

\usepackage{amsmath}
\usepackage{amsfonts}
\usepackage{amssymb}
\usepackage{amscd}
\usepackage{amsthm}
\usepackage{bbm}
\usepackage{color}      
\usepackage[linktocpage=true,plainpages=false,pdfpagelabels=false]{hyperref}

\input xypic

\usepackage[matrix,arrow,curve]{xy}

\usepackage{color}

\newcommand{\quash}[1]{}

\frenchspacing
\mathsurround=2pt
\emergencystretch=5pt
\tolerance=400
\voffset=-2cm
\hoffset=-17mm
\textwidth=16cm
\textheight=640pt
\righthyphenmin=2
\makeatletter
\makeatother
\newtheorem{defin}{Definition}
\newtheorem{prop}{Proposition}
\newtheorem{nt}{Remark}
\newtheorem{Th}{Theorem}
\newtheorem{lemma}{Lemma}

\newtheorem{example}{Example}

\newfont{\ssdbl}{msbm8}
\newfont{\sdbl}{msbm9}
\newfont{\dbl}{msbm10 at 12pt}

\newcommand{\oo}{{\cal O}}
\newcommand{\ff}{{\cal F}}

\newcommand{\Aut}{\mathop {\rm Aut}}

\newcommand{\tr}{\mathop {\rm Tr}}

\newcommand{\Frac}{\mathop {\rm Frac}}

\newcommand{\dq}{\mathbb{Q}}

\newcommand{\Z}{\dz}

\newcommand{\Ker}{{\rm Ker}\:}

\newcommand{\lrto}{\longrightarrow}

\newcommand{\df}{\mathbb{F}}

\newcommand{\F}{{\bf F}}

\newcommand{\D}{{\cal D}}

\newcommand{\E}{{\cal E}}

\def\Z{{\mathbb Z}}
\def\Q{{\mathbb Q}}

\def\C{{\mathbb C}}
\def\A{{\mathbb A}}

\newcommand{\Div}{{\rm Div}}

\newcommand{\vp}{\varphi}
\newcommand{\m}{{\mathfrak m}}

\begin{document}

\author{
D. V. Osipov, A. N. Parshin
\footnote{Both authors are partially  supported by
Russian Foundation for Basic Research (grants no.~13-01-12420
 ofi\_m2 and no.~14-01-00178a) and by the grant of supporting of Leading Scientific Schools NSh-2998.2014.1.}}

\title{Representations of the discrete Heisenberg group on distribution spaces of two-dimensional local fields}
\date{}

\maketitle
\flushright{\em To Vladimir Petrovich Platonov on his 75th birthday}
\abstract{We study a natural action of the Heisenberg group of integer unipotent matrices of  the third order on distribution space of  a two-dimensional local field for a flag on a two-dimensional scheme.}

\section{Introduction}
Let $X$ be an algebraic curve over a field $k$. Then one can attach a discrete valuation ring ${\cal O}_x$ and a local field $K_x = \Frac ({\cal O}_x)$ to a closed point $x \in  X$.  Thus the valuation group $\Gamma_x =  K_x^*/{\cal O}_x^*   = \Z$ will be the simplest discrete Abelian group canonically  attached to this point of the curve.  Starting from this local group one can define the global group $\Div(X) = \oplus_{x  \in  X } \Gamma_x$.

In arithmetics, when $k = {\mathbb F}_q$, the representation theory of these groups plays an important role in the study of arithmetical invariants of the curve $X$ such as  number of classes or  zeta-function.   These groups act on the functional spaces of the field  $K_x$  and of the adelic group $\A_X$.  All irreducible representations are of dimension $1$ (i.e. characters with values in $\C^*$), and the functional spaces which are infinite-dimensional can be presented as certain integrals (but not direct sums) of these  characters. This construction is very useful if one wants to get an analytical continuation and functional equation of  $L$-functions of the curve $X$ (see~\cite{W1, W2, P3}).

The goal of  this work is to extend this circle of questions to  algebraic surfaces. We will restrict ourselves by the local groups only.  As it was shown in~\cite{P2} and in the previous publications, the local fields will now be the two-dimensional local fields, and the corresponding discrete group is the simplest non-Abelian nilpotent group of  class  $2$:  the Heisenberg group  ${\rm Heis}(3, \Z)$ of  unipotent matrices of order $3$ with integer entries. This group has a very non-trivial  representation theory (see \cite{P1, P2}).

The classical representation theory of unitary representations of localy compact groups  in a Hilbert space  does not  work smoothly at this case. The Heisenberg group is not a group of type I in the von Neumann classification. Thus,  the Heisenberg group has a bad topology in its unitary dual space, has, generally speaking, no characters for irreducible representations, and the decompositions of representations into   integrals of the irreducible ones can be non-unique, see~\cite{D}.  Note that Abelian groups, compact topological groups and semi-simple Lie groups are the groups of type I. In our case, if we change the category of representation spaces and consider instead of the Hilbert spaces the vector spaces of countable dimension and without any topology, then the  situation will be much better.  This approach is modeled onto the theory of smooth representations of reductive algebraic groups over non-Archimedean local fields. They are  so called $l$-groups, and the discrete groups are such groups. Then  the  basic notions of the smooth representation  theory (see, for example,  \cite{B}) can be applied in this case.  The  new approach get for smooth non-unitary representations a moduli space which is a complex manifold.  The characters can be defined as modular forms on this manifold, see~\cite{P1}.  But the non-uniqueness will be still preserved. The category of this kind of representations   is not semi-simple: there are non-trivial $\rm Ext$'s.

In~\cite{OsipPar1, OsipPar2} the authors have developed harmonic analysis on the two-dimensional local fields and have defined the different types of functional spaces on these fields.

In~\cite[\S 5.4 (v)]{P2} the second named author asked a question on the structure of the natural representation of the discrete Heisenberg group
$G={\rm Heis}(3, \Z)$ on
the infinite-dimensional $\C$-vector space $V=\D'_{\oo}(K)^{{\oo'}^*}$. Here  $K$ is a two-dimensional local field, $\oo$ is the discrete valuation ring   of the field $K$,
$\oo'$ is the ring of valuation of rank $2$ of the field $K$, $\D'_{\oo}(K)$ is the space of distributions on $K$ introduced in~\cite{OsipPar1}
 and~\cite{OsipPar2}.

In this paper we will give a partial answer to the above question. In particular, we will describe explicitly  some irreducible subrepresentations of the representation of  $G$
on  $V$ and will relate these representations with classification of certain irreducible representation of $G$ given in~\cite{P1} and \cite{AP}. We note that the group $\tilde{G}=G \rtimes \Z$ also acts  naturally on the space $V$. We will calculate  the traces of  the group $\tilde{G}$ on some irreducible subrepresentations of the representation of $G$ on  $V$. These traces are theta-series, as it was predicted in~\cite[Example]{P1}
by another reasons.

Certainly, the results of this paper are only the first steps in the representation theory of discrete nilpotent groups.
Recently, it was shown that a reasonable class of irreducible representations of any finitely generated nilpotent group coincides with the class of monomial representations, see~\cite{BG}. This was known earlier for finite groups, Abelian groups and nilpotent groups of  class $2$.  The general result was conjectured by the second named author in~\cite{P2}.

 {\bf Acknowledgements.}
The first named author is grateful to the Institut des Hautes \'{E}tudes Scientifiques for the excellent working conditions, since the part of this research was done during his visit to IH\'{E}S in October-November 2013.

\section{Case of one-dimensional local fields} \label{sec2}
Let $L$ be a one-dimensional local field, i.e. a complete discrete valuation  field  with a finite residue field $\df_q$.  It is well-known that
$L \simeq \df_q((u))$, or $L$ is a finite extension of $\mathbb{Q}_p$. Let $\oo_L$ be the discrete valuation ring of $L$,
and $\m_L$ be the maximal ideal of  $\oo_L$.

Let $\D(L)$ be the space of $\mathbb{C}$-valued locally constant functions with compact support on $L$, $\E(L)$  the space of
$\mathbb{C}$-valued
locally constant functions on $L$, and $\D'(L)$ be the space of distributions on $L$, i.e. the dual vector space to the discrete space $\D(L)$.

The additive group of the field $L$ acts on the above spaces by translations.
 Let ${\mu(L) = \D'(L)^{L}}$ be the space of invariant  elements under this action. In other words,  it is the space of $\C$-valued  Haar measures on $L$. We have $\dim_{\C}(\mu(L))=1$.

The group $L^*$ acts on the additive group of the field $L$ by multiplications. Hence we have an action of the group $L^*$ on the the spaces $\D(L)$, $\E(L)$, $\D'(L)$, $\mu(L)$.
We note that the subgroup $\oo_L^*$ acts trivially on the space $\mu(L)$.
Hence the group $\Z \simeq L^*/\oo_L^*$ acts on the space $\mu(L)$ by the character $\Z \ni a  \longmapsto q^a \in \C^* $.  (A local parameter $u$ from $L^*$ goes to  the  element $1$ from $\Z$ via the isomorphism $L^*/\oo_L^*  \simeq \Z$.)

We note that the group $\Z \simeq L^*/\oo_L^*$ acts naturally on  $\C$-vector spaces of invariant elements:  $\D(L)^{\oo_L^*}$, $\E(L)^{\oo_L^*}$, $\D'(L)^{\oo_L^*}$. We have also the following subrepresentations of the representation of the group $\Z$ on the space $\D'(L)^{\oo_L^*}$:
\begin{equation}  \label{form1}
\D(L)^{\oo_L^*} \otimes_{\C}  \mu(L) \;  \subset \; \E(L)^{\oo_L^*} \otimes_{\C} \mu(L) \;  \subset  \; \D'(L)^{\oo_L^*} \mbox{,}
\end{equation}
where the last embedding comes from the map:
$$f \otimes \nu   \longmapsto \left\{ g \mapsto \nu(f g)   \right\} \qquad \forall \quad f \in \E(L)\mbox{,} \quad  g \in \D(L) \mbox{,} \quad \nu \in \mu(L) \mbox{.}$$

An explicit description of the space $\D(L)^{\oo_L^*}$ as the space of some sequences is the following:
$$
\D(L)^{\oo_L^*} = \left\{ \left( c_n \right)_{n \in \Z} \; \left|
\begin{array}{l} c_n \in \C\mbox{,} \quad  c_n =0 \quad \mbox{if} \quad n \ll 0 \mbox{,} \\
c_n=c_{n+1}=c_{n+2}=\ldots \quad \mbox{if} \quad n \gg 0
\end{array} \right.
\right\} \mbox{,}
$$
where by $f \in \D(L)^{\oo_L^*}$ we construct $c_n = f(u^n) \in \C$ for $n \in \Z $ (here $u \in \oo_L$ is a local parameter of the field $L$).

\begin{prop}  \label{dikoe}
The representation of the group $\Z$ on the space $\D(L)^{\oo_L^*}$ does not contain  irreducible subrepresentations.
\end{prop}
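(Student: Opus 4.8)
The plan is to exploit that $G=\Z$ is abelian, which reduces irreducible subrepresentations to eigenlines of the generator, and then to read off from the explicit sequence model that the generator has no eigenvectors at all.

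First I would identify the representation with a module over the group algebra $\C[\Z]=\C[t,t^{-1}]$, where $t$ denotes the image of the generator corresponding to a local parameter $u$ and acts on $\D(L)^{\oo_L^*}$ as the shift $(c_n)\mapsto (c_{n-1})$ (with no scalar factor, since $\oo_L^*$ acts trivially on $\oo_L^*$-invariant functions). Because $\C$ is algebraically closed and $\C[t,t^{-1}]$ is a commutative principal ideal domain, its simple modules are exactly the quotients $\C[t,t^{-1}]/(t-\lambda)\cong\C$ for $\lambda\in\C^*$; each is one-dimensional, with $t$ acting by the scalar $\lambda$. Consequently any irreducible subrepresentation $W\subset\D(L)^{\oo_L^*}$ must be one-dimensional and spanned by a nonzero vector $v=(c_n)$ that is an eigenvector of the shift, with some eigenvalue $\lambda\neq 0$.

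Next I would translate the eigenvalue equation $tv=\lambda v$ into the first-order recurrence $c_{n+1}=\mu\,c_n$ (with $\mu\in\C^*$ equal to $\lambda$ or $\lambda^{-1}$ according to the shift convention), whose only solutions are the geometric sequences $c_n=\mu^n c_0$. Since $\mu\neq 0$, such a sequence is either identically zero or nowhere zero. But the defining conditions of $\D(L)^{\oo_L^*}$ force $c_n=0$ for $n\ll 0$, which a nowhere-zero sequence cannot satisfy; hence $c_0=0$ and $v=0$. Thus the shift admits no nonzero eigenvector in $\D(L)^{\oo_L^*}$, contradicting the existence of $W$, and the proposition follows.

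The computation with geometric sequences is immediate, so the only step deserving care is the structural reduction: that an irreducible subrepresentation, a priori of infinite dimension, is in fact one-dimensional. I would settle this either through the classification of simple $\C[t,t^{-1}]$-modules recalled above, or via the generalized Schur lemma — $\dim_{\C}W$ is at most countable, hence strictly less than $|\C|$, so $\End_{\Z}(W)=\C$ and the shift necessarily acts on $W$ by a scalar. With this in place I expect no genuine obstacle.
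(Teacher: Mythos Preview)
Your argument is correct. The reduction to an eigenvector problem is sound: simple $\C[t,t^{-1}]$-modules are one-dimensional (either by the PID classification you sketch or by the countable-dimension Schur lemma), and the recurrence $c_{n-1}=\lambda c_n$ forces a geometric sequence that cannot satisfy the condition $c_n=0$ for $n\ll 0$ unless it vanishes identically.

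The paper takes a different route. Instead of analysing eigenvectors, it writes down an explicit $\C[z,z^{-1}]$-module isomorphism
\[
\D(L)^{\oo_L^*}\longrightarrow \C[z,z^{-1}],\qquad (c_n)\longmapsto (1-z)\sum_n c_n z^n,
\]
under which the generator $1\in\Z$ becomes multiplication by $z$. It then observes that for any nonzero $w\in\C[z,z^{-1}]$ the cyclic submodule $\C[z,z^{-1}]w$ contains the proper nonzero submodule $\C[z,z^{-1}]fw$ whenever $f$ is a non-unit polynomial; hence no cyclic submodule is simple. In effect the paper identifies the whole representation as the regular module of an integral domain that is not a field, from which the absence of simple submodules is immediate. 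Your approach is lighter in that it only uses the vanishing condition at $-\infty$ and avoids constructing the isomorphism; the paper's approach buys more, since it pins down the entire module structure (free of rank one) rather than just the lack of eigenvectors.
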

\begin{proof}
We prove that for any nonzero $v \in \D(L)^{\oo^*}$ the space $\C[\Z] \cdot v$  contains a proper nonzero $\Z$-invariant subspace. We consider an isomorphism:
$$
\D(L)^{\oo_L^*}   \lrto \C[z,z^{-1}]
\qquad
 \mbox{given as} \qquad \left(c_n\right)_{n \in \Z}   \longmapsto  (1-z) (\sum_{n \in \Z} c_n z^n ) \mbox{.}$$
Under this isomorphism the action of an element $ 1\in \Z$ goes to the multiplication on $z$.
Therefore for any element $w \in \C[z,z^{-1}]$ we have $\C[\Z] \cdot w= \C[z,z^{-1}]w$. Now for any element
$f \in \C[z]$ such that $f \notin \C \cdot z^k$ (for any integer $k$) we have that $\C[\Z] \cdot fw = \C[z,z^{-1}]fw$ is a proper nonzero $\Z$-invariant subspace in the space $\C[z,z^{-1}]w$.
The proposition is proved.
\end{proof}

\vspace{0.3cm}

We note that between  the spaces $\D(L)$ and $\E(L)$ there is a space $\tilde{\E}(L)$ that is the space of uniformly locally constant functions on $L$,
i.e. $f$ from  $\E(L)$ belongs to $\tilde{\E}(L)$ if and only if there is $n_f \in \mathbb{N}$ such that $f(x + y)=f(x)$ for any $x \in L$,
 $y \in \m_L^{n_f}$.  It is clear that ${ \tilde{\E}(L)^{\oo_L^*} = \E(L)^{\oo_L^*}}$ and
$$
\E(L)^{\oo_L^*} = \left\{ \left( c_n \right)_{n \in \Z} \; \left|
\begin{array}{l} c_n \in \C\mbox{,} \quad   \mbox{,} \quad
c_n=c_{n+1}=c_{n+2}=\ldots \quad \mbox{if} \quad n \gg 0
\end{array} \right.
\right\} \mbox{.}
$$
The one-dimensional subspace of all constant functions on $L$ is the set of all eigenvectors in $\E(L)^{\oo_L^*}$ with respect to the action
of $1 \in \Z$.

We note that
$$\D(L) = \mathop{\lim\limits_{\longrightarrow}}\limits_{m \in \Z}  {\ff_0}(m) \mbox{,}$$
where $\ff_0(m)$ is the space of all $\mathbb{C}$-valued
 functions with finite support on $L/ {\mathfrak m}_L^m$. The transition  map  $\ff_0(m_1)  \to \ff_0(m_2)$ for ${m_1 < m_2 }$ is induced by the natural map
 ${L/ {\mathfrak m}_L^{m_2}  \to L/ {\mathfrak m}_L^{m_1}}$, and the map $\ff_0(m) \to \D(L)$ is induced by the map $L \to L/ {\mathfrak m}_L^{m}$.
 Hence we have an equality
 $$\D'(L) = \mathop{\lim\limits_{\longleftarrow}}\limits_{m \in \Z}  {\ff}(m) \mbox{,}$$
 where $\ff(m)$ is the space of all $\mathbb{C}$-valued
 functions on $L/ {\mathfrak m}_L^m$. The transition map ${\ff(m_2)  \to \ff(m_1)}$ for $m_1 < m_2 $ is given by the sums of a function over the elements of finite fibers of the map $L/ {\mathfrak m}_L^{m_2}  \to L/ {\mathfrak m}_L^{m_1}$,
 and the pairing $\D(L)   \times \D'(L)  \lrto \C$ is induced by the pairing $\ff_0(m)  \times \ff(m) \to \C$ for any integer $m$. The last pairing is given by the sum of product of two functions over elements of  $L/ {\mathfrak m}_L^m$. In a slightly another language this presentation of $D'(L)$
 is described, for example, in~\cite[\S~4.2]{OsipPar1}).

Now we can explicitly describe   the space $\D'(L)^{\oo_L^*}$:
\begin{equation}  \label{eq1}
\D'(L)^{\oo_L^*} = \mathop{\lim_{\longleftarrow}}_{m \in \Z}  S(m)   \mbox{,}
\end{equation}
where $S(m)$ is the space of sequences:
$$
S(m) = \left\{ (a_n)_{n \le m} \; \left| \;  a_n \in \C   \right.   \right\}   \mbox{.}
$$
 Any $f \in \ff(m)^{\oo_L^*}$ goes to $(a_n)_{n \le m} \in S(m)$, where $a_n = f(u^m)$ and $u \in \m_L$ is a local parameter.
The  surjective maps ${S(m+1)   \to S(m)}$ in the projective limit of formula~\eqref{eq1} are the following:
$$
(a_n)_{n \le m+1}  \longmapsto (b_n)_{n \le m}   \mbox{,}
$$
where $b_m= (q-1)a_m + a_{m+1}$, $b_{m-k}= q a_{m-k}$ if $k >0$.

The pairing $\D(L)   \times \D'(L)  \lrto \C$ induces the non-degenerate pairing
\begin{equation}  \label{eq2}
\D(L)^{\oo_L^*} \times \D'(L)^{\oo_L^*}  \lrto \C  \mbox{,}
\end{equation}
which can be described explicitly in the following way. We fix an element $(c_n)_{n \in \Z}  \in \D(L)^{\oo_L^*}$ and elements
$(a_n)_{n \le m} \in S(m)$, $m \in \Z$ compatible with projective system~\eqref{eq1}.
We choose any integers $N < M$ such that $c_n =0$ for any $n < N$ and $c_{M+k}=c_M$ for any $k >0$. We define $l=M-N$. Now pairing~\eqref{eq2}
applied to elements
$(c_n)_{n \in \Z}$ and $(a_n)_{n \le M} \in S(M)$ is equal to
$$
(q^l - q^{l-1})c_N a_N + (q^{l-1} - q^{l-2})c_{N+1}a_{N+1} + \ldots + (q-1)c_{M-1}a_{M-1} + c_Ma_M \mbox{.}
$$
This result does not depend on the choice of appropriate $M$ and $N$.

We note that the action of $1 \in \Z$ on $\D'(L)$ maps an element $(a_n)_{n \le m} \in S(m)$, $m \in \Z$ to an element $(b_n)_{n \le m+1} \in S(m+1)$,
$m \in \Z$
where $b_n = a_{n-1}$ for $n \le m+1$.

Using explicit description of the space $\D'(L)^{\oo_L^*}$ by formula~\eqref{eq1}, we obtain a proposition by direct calculations.
\begin{prop} \label{alpha}
We fix any  $\alpha \in \C^*$. We define a $\C$-vector space
$$
\Phi_{\alpha} = \left\{ \phi \in \D'(L)^{\oo_L^*} \;  \left|  \; u(\phi)= \alpha \phi   \right.        \right\} \mbox{,}
$$
where $u \in L^*$ is a local parameter, $\phi \longmapsto  u(\phi) \in \D'(L)^{\oo_L^*} $ is an action of $u \in L^*$ on $\phi$ (which is the same as an action of $1 \in \Z$ on $\phi$).
We have $$\dim_{\C} \Phi_{\alpha} =1 \mbox{.}$$
\end{prop}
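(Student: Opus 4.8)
The plan is to transport the whole question to the pairing \eqref{eq2} and thereby reduce it to an elementary computation of an eigenfunctional on a Laurent polynomial ring. First I would record, exactly as in the proof of Proposition~\ref{dikoe}, the isomorphism $\D(L)^{\oo_L^*}\cong\C[z,z^{-1}]$ under which the action of $1\in\Z$ (i.e. of the local parameter $u$) becomes multiplication by $z$. Since the pairing \eqref{eq2} is non-degenerate and is the restriction of the tautological pairing $\D(L)\times\D'(L)\to\C$, the space $\D'(L)^{\oo_L^*}$ embeds into the full linear dual $\Hom_\C(\C[z,z^{-1}],\C)$, the action of $u$ on $\D'(L)^{\oo_L^*}$ going over to the contragredient of multiplication by $z$. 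Thus a vector $\phi\in\Phi_\alpha$ is nothing but a linear functional $\ell$ on $\C[z,z^{-1}]$ satisfying $\ell(z^{-1}p)=\alpha\,\ell(p)$ for all $p$.

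Second I would compute $\ell$ on the basis. Writing $\lambda_k=\langle z^k,\phi\rangle$, where $z^k$ corresponds to the indicator sequence $c_n=\mathbf 1_{n\ge k}$ under the isomorphism above, a short computation with the explicit formula for the pairing \eqref{eq2} together with the transition maps of \eqref{eq1} shows that $\lambda_k$ equals the top entry $a_k^{(k)}$ of the $S(k)$-component of $\phi$. The eigenvector relation $u(\phi)=\alpha\phi$ then reads $\lambda_{k-1}=\alpha\lambda_k$, a first-order linear recursion whose solution is $\lambda_k=\alpha^{-k}\lambda_0$, determined by the single scalar $\lambda_0$. Because the embedding into the dual is injective, $\phi$ is determined by the collection $(\lambda_k)$, hence by $\lambda_0$ alone; this already gives $\dim_\C\Phi_\alpha\le 1$.

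It remains to prove existence, i.e. that the candidate functional $z^k\mapsto\alpha^{-k}$ is actually represented by an element of $\D'(L)^{\oo_L^*}$, and this is the step I expect to be the crux. The clean way is to check that the pairing identifies $\D'(L)^{\oo_L^*}$ with the \emph{entire} dual $\Hom_\C(\C[z,z^{-1}],\C)$: indeed $\D'(L)^{\oo_L^*}=(\D(L)^*)^{\oo_L^*}=(\D(L)_{\oo_L^*})^*$, and for the profinite group $\oo_L^*$ acting smoothly over $\C$ the averaging projector identifies invariants with coinvariants, so $\D(L)_{\oo_L^*}\cong\D(L)^{\oo_L^*}\cong\C[z,z^{-1}]$; then the eigenfunctional above is automatically realized and is an eigenvector, giving $\dim_\C\Phi_\alpha\ge 1$. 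Alternatively one can stay entirely inside the explicit model \eqref{eq1}, set $a_k^{(k)}=\alpha^{-k}$, solve for the remaining entries from the recursion forced by the action, and verify directly that the resulting family $(a_n^{(m)})$ is compatible with the transition maps $S(m+1)\to S(m)$; this bookkeeping is precisely the ``direct calculation'' alluded to, and it is the place where the factors $q$ and $q-1$ in the transition maps must be seen to be consistent with the value $\alpha$.
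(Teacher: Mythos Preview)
Your argument is correct, and your primary route is genuinely different from the paper's. The paper simply says that the result follows by ``direct calculations'' from the inverse-limit description~\eqref{eq1}, and then Remark~\ref{nt1} records the explicit element $g_\alpha\in\Phi_\alpha$; in other words, the paper works entirely inside the model $\varprojlim_m S(m)$, solves the eigenvector recursion there, and writes down the compatible family by hand. Your alternative at the end (solving for $a_n^{(m)}$ and checking compatibility with the transition maps) is exactly that computation.

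Your main approach avoids this bookkeeping by identifying $\D'(L)^{\oo_L^*}$ with the \emph{full} algebraic dual of $\D(L)^{\oo_L^*}\cong\C[z,z^{-1}]$, using $(\D(L)^*)^{\oo_L^*}=(\D(L)_{\oo_L^*})^*$ together with the averaging projector for the compact group $\oo_L^*$ acting smoothly on $\D(L)$. Once this identification is made (equivariantly for the contragredient $\Z$-action), both the uniqueness and the existence of the eigenvector are immediate: a functional $\ell$ with $\ell(z^{-1}p)=\alpha\,\ell(p)$ is determined by, and exists for, any choice of $\ell(1)$. What your approach buys is a clean conceptual reason why no obstruction to existence can arise for any $\alpha\in\C^*$, independent of the specific constants $q,\,q-1$ in the transition maps; what the paper's direct computation buys is the explicit formula for $g_\alpha$ in Remark~\ref{nt1}, which is used later (e.g.\ in Remark~\ref{nt2} and in Section~\ref{Traces}).
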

\begin{nt}  \label{nt1}
\em
For any $\alpha \in \C^*$ we can construct an element $ g_{\alpha}$ from $\Phi_{\alpha} $ explicitly:
$$
(a_n)_{n \le m}  \in S(m)  \mbox{,}  \quad \mbox{where}  \quad a_m = \alpha^{-m} \mbox{,} \quad a_{m-k} = \frac{\alpha^{-m+k-1}(\alpha-1)}{q^{k-1}(q-1)}
\quad  \mbox{for} \quad k >0  \mbox{.}
$$
We note that the element $g_{\alpha}$ is uniquely defined inside the $1$-dimensional space $\Phi_{\alpha}$
by a property that the pairing of $g_{\alpha}$ with the characteristic function of the subgroup $\oo_L$ is equal to $1$.
\end{nt}
\begin{example} \label{example1} \em
If $\alpha =1$ then $\Phi_{\alpha} = \C \cdot \delta_0$, where $\delta_0$ is the delta-function, i.e. $\delta_0(f)= f(0)$ for any $f \in \D(L)$.
Besides, $g_1 = \delta_0$.

If $\alpha =q$, then $\Phi_{\alpha} = \mu(L) $ is the space of $\C$-valued Haar measures on $L$.
Besides, $g_q$ is the Haar measure which is equal to $1$ after its application to  the open compact subgroup $\oo_L$.
\end{example}

\begin{nt}  {\em The issues of this section were for the first time  formulated and developed  by A.~Weil in \cite{W1} (see also a discussion in \cite[Weil problem]{P3}). }
\end{nt}

\section{Case of two-dimensional local fields}
\subsection{Two-dimensional local fields and  the discrete Heisenberg group} \label{tdl}
Let $K$ be a two-dimensional local field.
Assume that  $K$  is  isomorphic to one of the following fields (we will not consider another types of two-dimensional local fields):
\begin{equation}  \label{cases}
\df_q((u))((t)) \mbox{, } \qquad  \qquad L((t)) \mbox{, }  \qquad  \qquad  M\{\{ u \}\} \mbox{,}
\end{equation}
where the fields $L$ and $M$ are finite  extensions of the field $\Q_p$ such that the finite field $\df_q$ is the residue field of both  $L$ and $M$.
(We recall that the field
$M\{\{u\}\}$  is the completion of the field $\Frac(\oo_M[[u]])$ with respect to the discrete valuation given by the prime ideal $ \m_M[[u]]$ of height $1$
in the ring $\oo_M[[u]]$. Here $ \m_M$ is the maximal ideal of the discrete valuation ring $\oo_M$ of the field $M$.)
 See also, for example, \cite[\S~2]{Osi} for the brief description how above types of two-dimensional local fields are constructed from an algebraic surface over
a finite field or from an arithmetic surface.

Let $u$ be a local parameter of  $L$ (in the second case of~\eqref{cases}), $t$ be a local parameter of  $M$
(in the third case of~\eqref{cases}).

The field $K$ is a discrete valuation field   such that $t$ is a local parameter of this valuation. Let $\oo$ be the discrete
valuation ring of  $K$. We define the $\oo$-module $\oo(i) \subset K$ for any $i \in \Z$ as $\oo(i)= t^i O$. We have $\oo(0)=\oo$,
$\oo(0)/\oo(1)$ is the residue field of $K$. The image of the element $u$ in the residue field is the local parameter there.

The pair $t,u$ is called a system of local parameters of the field $K$.

We define a subring $\oo' \subset \oo$. We say that an element $g $
from  $\oo$ belongs to  $\oo'$ if and only if  the image of $g$ in the residue field of  $K$ belongs
to the discrete valuation ring of the residue field of $K$.

The field $K$ has a  valuation $\nu$ of rank $2$ such that $\nu(f)= (a,b) \in \Z^2$ for $f \in K^*$ if and only if $f= t^a u^b g$
where $g \in {\oo'}^*$. It is clear that the ring $\oo'$ is the valuation  ring of  $\nu$.

In accordance with~\eqref{cases} we define a subring $B$ of $K$ as one of the following expressions:
$$
\df_q[[u]]((t)) \mbox{, } \qquad  \qquad \oo_L((t)) \mbox{, }  \qquad  \qquad  M \cdot \oo_M[[u]] \mbox{,}
$$
where $\oo_L$ is the discrete valuation ring of $L$. (We note that the subring $B$ depends on the choice of  isomorphisms~\eqref{cases}.)

Let $i \le j \in \Z$. An $\oo$-module $\oo(i)/\oo(j)$ is a locally compact group
with the base of neighbourhoods of $0$ given by subgroups $(u^kB \cap \oo(i)) / (u^kB \cap \oo(j))$, $k \in\Z$.
For any $k \in \Z$ the subgroup $ (u^kB \cap \oo(i)) / (u^kB \cap \oo(j))$ is an open compact subgroup in $\oo(i)/ \oo(j)$. We define a $1$-dimensional $\C$-vector space
$\mu(\oo(i)/ \oo(j))$ as the space of $\C$-valued Haar measures on the group $\oo(i)/\oo(j)$. For any $r, s \in \Z $ we define
a $1$-dimensional $\C$-vector space  $\mu(\oo(r) \mid \oo(s)) =  \mu(\oo(s) / \oo(r))$ when $r \ge s $, and
$\mu(\oo(r) \mid \oo(s)) =  \mu(\oo(r) / \oo(s))^*$ when $r \le s$. We have
that $\mu(\oo(r) \mid \oo(r))$ is canonically isomorphic to $\C$. There is also a canonical isomorphism for any elements $r,s, w$ from $\Z$:
\begin{equation}  \label{isom}
\mu (\oo(r) \mid \oo(s)) \otimes_{\C} \mu(\oo(s) \mid \oo(w))  \lrto \mu(\oo(r) \mid \oo(w))
\end{equation}
satisfying associativity for any four elements from $\Z$. Any element $g \in K^*$ maps the $\C$-vector space  $\mu(\oo(r) \mid \oo(s))$ to
the $\C$-vector space $\mu(g\oo(r) \mid g\oo(s)) $.

\begin{lemma} \label{lem1}
The group ${\oo'}^* $ acts by the trivial character on the $1$-dimensional $\C$-vector space $\mu(\oo(s)/\oo(r))$ for any integers $ s \le r$.
\end{lemma}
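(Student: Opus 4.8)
The plan is to reduce the statement to the one-dimensional situation treated in Section~\ref{sec2}, using the modulus of an automorphism of a locally compact group. First I would observe that any $g \in {\oo'}^*$ satisfies $\nu(g)=(0,0)$, so its $t$-adic valuation is $0$ and $g$ is a unit of $\oo$; therefore $g\oo(i) = g\,t^i\oo = t^i\oo = \oo(i)$ for every $i \in \Z$, and multiplication by $g$ induces a topological automorphism of the locally compact group $\oo(s)/\oo(r)$. On the one-dimensional space $\mu(\oo(s)/\oo(r))$ of Haar measures this automorphism acts by a scalar which is a fixed power of its modulus $\mathrm{mod}(g)$; hence it suffices to prove that $\mathrm{mod}(g)=1$.

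Next I would filter $\oo(s)/\oo(r)$ by the open subgroups $\oo(i)/\oo(r)$ for $s \le i \le r$, obtaining short exact sequences of locally compact groups
$$
0 \longrightarrow \oo(i)/\oo(i+1) \longrightarrow \oo(s)/\oo(i+1) \longrightarrow \oo(s)/\oo(i) \longrightarrow 0
$$
each compatible with multiplication by $g$. Invoking the multiplicativity of the modulus in a short exact sequence (Weil's quotient-measure formula, which is the measure-theoretic content of the canonical isomorphism~\eqref{isom}), the computation of $\mathrm{mod}(g)$ on $\oo(s)/\oo(r)$ reduces to the product of its values on the successive graded quotients $\oo(i)/\oo(i+1)$, $s \le i < r$.

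Finally I would identify each graded quotient with the residue field. Multiplication by $t^{-i}$ gives an isomorphism of locally compact groups $\oo(i)/\oo(i+1) \xrightarrow{\sim} \oo/\oo(1) = E$, where $E$ is the residue field of $K$, a one-dimensional local field; since $K$ is commutative this isomorphism intertwines multiplication by $g$ with multiplication by the image $\bar g$ of $g$ in $E$. By the definition of $\oo'$, both $\bar g$ and $\overline{g^{-1}}$ lie in the discrete valuation ring $\oo_E$ of $E$, and $\bar g \ne 0$ because $g \in \oo^*$; hence $\bar g \in \oo_E^*$. The observation in Section~\ref{sec2} that $\oo_E^*$ acts trivially on $\mu(E)$ then yields $\mathrm{mod}(\bar g)=1$ on each graded piece, so the product over $s \le i < r$ equals $1$ and $\mathrm{mod}(g)=1$, proving that $g$ acts by the trivial character.

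I would expect the only genuinely delicate point to be the second step: the groups $\oo(s)/\oo(r)$ are locally compact but not compact (each graded piece is isomorphic to a full one-dimensional local field), so one must apply the multiplicativity of the modulus in the non-compact setting rather than a naive counting of indices of compact open subgroups. Everything else---the invariance $g\oo(i)=\oo(i)$, the intertwining on the graded pieces, and the membership $\bar g \in \oo_E^*$---is routine once the modulus formalism is in place.
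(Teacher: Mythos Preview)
Your proposal is correct and follows essentially the same route as the paper: reduce via the canonical isomorphism~\eqref{isom} (equivalently, multiplicativity of the modulus in short exact sequences) to the graded pieces $\oo(i)/\oo(i+1)\simeq \bar K$, and then invoke the one-dimensional fact that $\oo_{\bar K}^*$ acts trivially on $\mu(\bar K)$. The paper's proof is simply the two-line version of your argument, phrased as an induction on $r-s$.
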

\begin{proof}
The proof follows by induction on $r-s$ with the help of formula~\eqref{isom}. The case $r-s=1$ is trivial as the case
 of one-dimensional local fields. The lemma is proved.
\end{proof}

We describe now a central extension of $K^*$ by $\C^*$:
\begin{equation}  \label{centext}
1 \lrto \C^* \lrto \widehat{K^*} \lrto K^* \lrto 1 \mbox{,}
\end{equation}
where the group $\widehat{K^*}$ consists from the pairs $(g, \mu)$: $g \in K^*$, $\mu \in \mu(\oo \mid g\oo) )$ and $\mu \ne 0$. The group law in
$\widehat{K^*}$ is the following: $(g_1, \mu_1) \cdot (g_2, \mu_2)= (g_1 g_2, \mu_1 \otimes g_1(\mu_2))$.

Now for any $i \le j \in \Z$
 instead
the $\C^*$-torsor $\mu(\oo(i)  / \oo(j)) \setminus 0$ we consider  a $\Z$-torsor which is a set of Haar measures on the group $\oo(i)/\oo(j)$
with the property that the value of a Haar measure $\mu$ on the open compact subgroup \linebreak
 $ (B \cap \oo(i)) / (B \cap \oo(j))$ is equal to $q^b$ for some
$b \in \Z$ (an integer $b$ depends on $\mu$). Using these $\Z$-torsors, by the same construction as for  central extension~\eqref{centext}
we obtain a  central extension of $K^*$  by $\Z$:
\begin{equation}   \label{centext2}
0 \lrto \Z  \lrto \widetilde{K^*}   \lrto K^* \lrto 1  \mbox{.}
\end{equation}
Besides, we have an embedding of central extension~\eqref{centext2} to central extension~\eqref{centext}: an embedding on kernels
$\Z \ni b \longmapsto q^b \in \C^*$ gives an embedding of groups $\widetilde{K^*}  \hookrightarrow \widehat{K^*}$.

We note that central extensions~\eqref{centext2} and~\eqref{centext} split canonically over the subgroup $\oo^*$ of the group $K^*$:
$g \mapsto (g,1)$, where $1 \in \C= \mu(\oo \mid g\oo)$ since $g\oo=\oo$. Therefore these central extensions split canonically over the subgroup
${\oo'}^*$, i.~e. we can consider the group ${\oo'}^*$ as a subgroup of the group $\widetilde{K^*}$.
 It is easy to see from Lemma~\ref{lem1} that the subgroup
${\oo'}^*$ is  contained in the center of the group $\widehat{K^*}$, and therefore this subgroup is also contained in the center of the group
 $\widetilde{K^*} $.

For any integers $r$ and $s$ we define an element $\mu_{B,r,s} \in \mu(\oo(r) \mid \oo(s))$ as following. If $s \le r$, then $\mu(\oo(r) \mid \oo(s))=
\mu(\oo(s)/\oo(r))$, and the element $\mu_{B,r,s}$ is a Haar measure on the group $\oo(s)/\oo(r)$ such that the value of  $\mu_{B,r,s}$ on the open compact
subgroup
$(B \cap \oo(s))/ (B  \cap \oo(r))$ is equal to $1$. If $s \ge r$, then $\mu(\oo(r) \mid \oo(s))= \mu(\oo(r)/ \oo(s))^*$, and the element
$\mu_{B,r,s}$
is the dual element to the element  $\mu_{B,s,r} \in \mu(\oo(s) \mid \oo(r))= \mu(\oo(r)/\oo(s))$. We note that $\mu_{B,r,r}=1 \in \C$,
and for any integers $r$, $s$ and $w$ the element $\mu_{B,r,s} \otimes \mu_{B,s,w}$ goes to the element $\mu_{B,r,w}$ via
isomorphism~\eqref{isom}.

\bigskip
We consider the discrete Heisenberg group $G = {\rm Heis}(3, \Z) $ which is the group of matrices of the form:
$$
\left(
\begin{array}{rcl}
1 & a & c \\
0 & 1 & b \\
0 & 0 & 1
\end{array}
\right) \mbox{,}
$$
where $a, b, c$ are from $\Z$. We can consider also the group $G$ as a set of all integer triples endowed with the group law:
$$
(a_1,b_1,c_1)(a_2,b_2,c_2)= (a_1 + a_2, b_1 +b _2, c_1+ c_2 + a_1b_2)   \mbox{.}
$$
We note that $G$ may be presented as
$$<\eta, \gamma \; : \; [\eta,[ \eta, \gamma]]=1 =[\gamma, [\eta, \gamma]]    >$$
 with $\eta$ and $\gamma$
corresponding to the generators $(1,0,0)$ and $(0,1,0)$ (see, for example,~\cite[\S~2]{Ka}).
It is clear that the group $G$ is nilpotent of class $2$.

\begin{prop} \label{Heisen}
The group $\widetilde{K^*}/{\oo'}^*$ is isomorphic to the group $G= {\rm Heis}(3, \Z)$.
\end{prop}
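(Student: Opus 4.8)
The plan is to realize $\widetilde{K^*}/{\oo'}^*$ as a central extension of $\Z^2$ by $\Z$ whose commutator pairing is the standard symplectic form, and then to match it with $G$ via the universal property of the Heisenberg presentation. First I would record that the rank-$2$ valuation $\nu$ is a homomorphism $K^* \to \Z^2$ with kernel exactly ${\oo'}^*$, so that it identifies $K^*/{\oo'}^* \cong \Z^2$, the images $\bar t$ and $\bar u$ of the local parameters being the standard generators $(1,0)$ and $(0,1)$. Since ${\oo'}^*$ lies in the center of $\widetilde{K^*}$ and meets the kernel $\Z$ of~\eqref{centext2} only in the identity (because ${\oo'}^*$ injects into $K^*$ while $\Z$ maps to $1$), the quotient $\widetilde{K^*}/{\oo'}^*$ sits in a central extension
\begin{equation*}
0 \lrto \Z \lrto \widetilde{K^*}/{\oo'}^* \lrto \Z^2 \lrto 0 \mbox{,}
\end{equation*}
in which the image of $\Z$ is still infinite cyclic and central.

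The main step is to compute the commutator of lifts of the two generators. As $u \in \oo^*$, the canonical splitting over $\oo^*$ gives the lift $\tilde u = (u,1)$, while for $t$ I would take $\tilde t = (t, \mu_{B,0,1})$ with $\mu_{B,0,1} \in \mu(\oo \mid t\oo)$. Because $K^*$ is abelian, the commutator $[\tilde t,\tilde u]$ automatically lies in the central kernel $\Z$, so it is independent of the chosen measures. A direct calculation with the group law $(g_1,\mu_1)(g_2,\mu_2)=(g_1g_2,\mu_1\otimes g_1(\mu_2))$ reduces $[\tilde t,\tilde u]$ to the ratio between $\mu_{B,0,1}$ and its transform $u(\mu_{B,0,1})$ inside the one-dimensional space $\mu(\oo \mid t\oo)=\mu(\oo(0)/\oo(1))^*$; this ratio is exactly the factor by which multiplication by $u$ rescales Haar measure on the residue field $\oo(0)/\oo(1)$. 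Since $u$ acts there as the local parameter of this one-dimensional local field, that factor is $q^{\pm 1}$, i.e. $\pm 1$ in the $\Z$-torsor. Hence $[\tilde t,\tilde u]$ is a generator of the central $\Z$, and of course $[\tilde t,[\tilde t,\tilde u]]=[\tilde u,[\tilde t,\tilde u]]=1$ because the commutator is central.

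Finally I would build the isomorphism directly from the presentation $G=<\eta,\gamma : [\eta,[\eta,\gamma]]=1=[\gamma,[\eta,\gamma]]>$. The relations just checked show that sending $\eta \mapsto \tilde t$ and $\gamma \mapsto \tilde u$ (modulo ${\oo'}^*$) defines a homomorphism $\Psi\colon G \to \widetilde{K^*}/{\oo'}^*$. This $\Psi$ is a morphism of central extensions: on the quotients it induces the identity of $\Z^2$ (generators to the generators $\bar t,\bar u$), and on the centers it sends the generator $(0,0,1)=[\eta,\gamma]$ of the center of $G$ to the generator $[\tilde t,\tilde u]$ of the central $\Z$, hence an isomorphism there. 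The five lemma then forces $\Psi$ to be an isomorphism. I expect the only genuine obstacle to be the commutator computation of the previous paragraph, namely the careful bookkeeping of the $\Z$-torsor of Haar measures and of the measure-rescaling factor under multiplication by $u$; once that factor is pinned down to be a generator of the central $\Z$, the rest is formal.
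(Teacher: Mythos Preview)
Your argument is correct, but it takes a genuinely different route from the paper's. You proceed structurally: recognize $\widetilde{K^*}/{\oo'}^*$ as a central extension of $\Z^2$ by $\Z$, compute the commutator $[\tilde t,\tilde u]$ to be a generator of the central $\Z$, and then invoke the two-generator presentation of $G$ together with the five lemma. The paper instead writes down an explicit map
\[
G \ni (a,b,c) \; \longmapsto \; (u^a t^b,\; q^{-c}\mu_{B,0,b}) \in \widetilde{K^*}
\]
and verifies by direct calculation (using $t(\mu_{B,r,s})=\mu_{B,r+1,s+1}$ and $u(\mu_{B,r,s})=q^{r-s}\mu_{B,r,s}$) that its composition with the quotient map is a group isomorphism. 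Your approach is conceptually cleaner and isolates exactly the one nontrivial input---the commutator pairing---while the paper's approach is more hands-on. The trade-off is that the paper's explicit formula is not merely a proof device: it is reused verbatim later (in the proof of Proposition~\ref{exten} to compute the $G$-action on $\Psi$, and in Section~\ref{Traces}, formula~\eqref{expl-auto}, to describe the automorphisms $R_d$ on $G$). So while your proof establishes the proposition, you would still need to extract the concrete coordinates $(a,b,c)\leftrightarrow (u^at^b, q^{-c}\mu_{B,0,b})$ afterwards for the subsequent computations; the paper gets this for free. One small remark: with your assignment $\eta\mapsto\tilde t$, $\gamma\mapsto\tilde u$ the induced map on $\Z^2$ matches the convention $\nu(t)=(1,0)$, $\nu(u)=(0,1)$, whereas the paper's explicit map sends $(1,0,0)\mapsto u$ and $(0,1,0)\mapsto t$; this is harmless but worth aligning if you later splice into the paper's coordinates.
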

\begin{proof}
We will construct this isomorphism explicitly.
 We consider a map
\begin{equation}  \label{map}
G \ni (a,b,c) \;  \longmapsto  \; (u^a t^b, q^{-c} \mu_{B, 0, b}) \in \widetilde{K^*}   \mbox{.}
\end{equation}

Using  the equality $B = tB$
we obtain an action of the element $t$ as $t(\mu_{B,r,s})=\mu_{B, r+1, s+1} $. Besides an action of the element $u$ is
$u(\mu_{B,r,s})= q^{r-s}\mu_{B,r,s}$ (we recall that the pair $t,u$  is the system of local parameters of $K$).
Now from an explicit description of the group $\widetilde{K^*}$ given above, by a direct calculation we obtain
that the composition of map~\eqref{map} with the quotient map $\widetilde{K^*} \to \widetilde{K^*}/{\oo'}^*$ is an isomorphism between groups $G$
and $\widetilde{K^*}/{\oo'}^*$. The proposition is proved.
\end{proof}

\subsection{Representations of the discrete Heisenberg group} \label{td2}
Let $K$ be a two-dimensional local field that is isomorphic to one of the fields in formula~\eqref{cases}.
We keep the notation from Section~\ref{tdl}.

The field $K$ is a discrete valuation field.
Its residue field $\bar{K} = \oo / \oo(1)$ is isomorphic to the following field (according to formula~\eqref{cases}):
$$
\df_q((u))  \mbox{,}  \qquad \qquad  L \mbox{,}  \qquad  \qquad \df_q((u))  \mbox{.}
$$

For any integers $s \le r$ the group $\oo(s) / \oo(r)$ is a locally compact totally disconnected group.
Let $\D(\oo(s)/\oo(r))$ be the space of $\mathbb{C}$-valued locally constant functions with compact support
on the group $\oo(s)/ \oo(r)$.
Let $\D'(\oo(s)/ \oo(r))$ be the space of distributions on the
group $\oo(s)/ \oo(r)$ (i.e. the dual vector space to the discrete space $\D(\oo(s)/\oo(r))$).

The following spaces which are two-dimensional analogs of the classical function and distribution spaces  were constructed in~\cite[\S~5.3]{OsipPar1}
and~\cite[\S~7]{OsipPar2}  using an idea from~\cite{K}:
\begin{eqnarray}
\label{for1}
\D_{\oo}(K) = \mathop{\lim_{\longleftarrow}}_s  \mathop{\lim_{\longleftarrow}}_{r > s} \D(\oo(s)/\oo(r)) \otimes_{\C}  \mu(\oo(r) \mid \oo)  \mbox{,} \\
\label{for2}
\D'_{\oo}(K) = \mathop{\lim_{\lrto}}_s  \mathop{\lim_{\lrto}}_{r > s} \D'(\oo(s)/\oo(r)) \otimes_{\C} \mu(\oo \mid \oo(r))  \mbox{,}
\end{eqnarray}
where transition maps in above limits are given by direct and inverse images of function and distribution spaces on locally compact groups.

There is a natural non-degenerate pairing:
$$
\D_{\oo}(K)  \times \D'_{\oo}(K)  \lrto \C  \mbox{.}
$$
The group $\widehat{K^*}$ acts naturally on the spaces $\D_{\oo}(K)$ and $\D'_{\oo}(K)$. We recall this action on $\D'_{\oo}(K)$ (the case of
$\D_{\oo}(K)$ is analogous).  We have to apply limits to the following action of an element $(g, \mu)$ from $\widehat{K^*}$:
\begin{eqnarray*}
\D'(\oo(s)/\oo(r)) \otimes_{\C} \mu(\oo \mid \oo(r)) \stackrel{g( \; )}{\lrto}  \D'(g\oo(s)/g\oo(r)) \otimes_{\C} \mu(g\oo \mid g\oo(r))  \\
  \D'(g\oo(s)/g\oo(r)) \otimes_{\C} \mu(g\oo \mid g\oo(r))
 \stackrel{\mu \otimes}{\lrto} \D'(g\oo(s)/g\oo(r)) \otimes_{\C} \mu(\oo \mid g\oo(r))  \mbox{.}
\end{eqnarray*}

We recall that the group ${\oo'}^*$ can be considered as a subgroup of the group $\widehat{K^*}$.
We are interested in the spaces of invariant elements with respect to the action of the group ${\oo'}^*$: $\D_{\oo}(K)^{{\oo'}^*}$
and $\D'_{\oo}(K)^{{\oo'}^*}$.

\begin{prop}
We have  $\D_{\oo}(K)^{{\oo'}^*} =0$.
\end{prop}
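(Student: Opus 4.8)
The plan is to use that taking $\oo'^*$-invariants commutes with the inverse limit in~\eqref{for1}, so that
\[
\D_{\oo}(K)^{{\oo'}^*} \;=\; \varprojlim_s \varprojlim_{r>s} \; \D(\oo(s)/\oo(r))^{{\oo'}^*} \otimes_{\C} \mu(\oo(r) \mid \oo) \mbox{,}
\]
where Lemma~\ref{lem1} has been used: since ${\oo'}^* \subset \oo^*$ acts on each finite level $\oo(s)/\oo(r)$ by multiplication and acts trivially on the measure factor, the invariants split off the measure line. Thus an invariant element is a compatible family $(\phi_{s,r})$ with each $\phi_{s,r} \in \D(\oo(s)/\oo(r))^{{\oo'}^*}$, the transition maps being the direct images (fibrewise integration) dual to the pullbacks defining~\eqref{for2}.

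The key geometric step is to locate the support of such an invariant function. First I would describe the ${\oo'}^*$-orbits on $\oo(s)/\oo(r)$: multiplication by $g \in {\oo'}^*$ preserves the $t$-adic filtration and acts on each graded quotient $\oo(j)/\oo(j+1) \simeq \bar{K}$ through the unit $\bar g \in \oo_{\bar K}^*$, so that the orbit of a nonzero element is pinned down by the pair (leading $t$-degree $j$, valuation in $\bar K$ of the leading coefficient). The crucial claim is then that every orbit whose leading $t$-degree $j$ lies strictly below the top degree $r-1$ is \emph{non-compact}: already the coefficient of $t^{j+1}$ runs over all of $\bar K$ as $g$ varies, and $\bar K$ (a nondiscrete locally compact field in each of the three cases~\eqref{cases}) has non-compact additive group. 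Since a function in $\D$ has compact support and an invariant function is constant on orbits, $\phi_{s,r}$ must vanish on all such orbits; hence $\phi_{s,r}$ is supported on the top graded piece $\oo(r-1)/\oo(r)$.

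Finally I would invoke the compatibility in the inner limit. For the transition $\pi \colon \oo(s)/\oo(r+1) \to \oo(s)/\oo(r)$ one has $\phi_{s,r} = \pi_*(\phi_{s,r+1})$, and by the previous step $\phi_{s,r+1}$ is supported on $\oo(r)/\oo(r+1) = \ker \pi$. Therefore its direct image is supported at the single point $0 \in \oo(s)/\oo(r)$, i.e. it is a scalar multiple of the characteristic function of $\{0\}$; as $\{0\}$ is not open, such a function is locally constant only if it is zero. Hence $\phi_{s,r} = 0$ for every $s$ and every $r$, and the invariant element is $0$.

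I expect the main obstacle to be the second paragraph: making the orbit description and, above all, the non-compactness of the sub-top orbits precise \emph{in the quotient} $\oo(s)/\oo(r)$ (as opposed to in $K$ itself), and checking this uniformly for all three fields in~\eqref{cases}. A secondary point to pin down carefully is the identification of the transition maps as direct images $\pi_*$, which I would justify by dualizing the pullback maps that define $\D'_\oo(K)$ in~\eqref{for2} together with the nondegenerate pairing between $\D_\oo(K)$ and $\D'_\oo(K)$.
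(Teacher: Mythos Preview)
Your proposal is correct and rests on the same idea as the paper: ${\oo'}^*$-orbits of nonzero elements in $\oo(s)/\oo(r)$ are non-compact (for $r-s>1$), which is incompatible with an invariant function having compact support. The paper's execution is slightly more direct—it simply observes that any such orbit escapes every compact set $u^l V$, contradicting $f(gy)=f(y)\neq 0$—and in fact your step~4 is unnecessary: once you know from step~3 that $\phi_{s,r}$ is supported on the closed subgroup $\oo(r-1)/\oo(r)$, which has empty interior in $\oo(s)/\oo(r)$ when $r-s>1$, local constancy already forces $\phi_{s,r}=0$ without ever invoking the transition maps.
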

\begin{proof}
We have Lemma~\ref{lem1}. Besides,  the transition  maps in projective limits in formula~\eqref{for1} are ${\oo'}^*$-equivariant maps.   Therefore it is enough to prove that $\D(\oo(s)/ \oo(r))^{{\oo'}^*} =0$ for $r-s >1$.
Assume the converse. We take a non-zero element $f \in \D(\oo(s)/ \oo(r))^{{\oo'}^*}$. Since $f$ has a compact support,
 there is an integer $l $ such that
$f(x) =0$ when $x \notin u^l V$,  where $V=(B \cap \oo(s))/ (B \cap \oo(r))$. It contradicts to the fact $f(gy)=f(y)$ where $g$ is any from ${\oo'}^*$,
$y$ is any from $\oo(s)/\oo(r)$, because if $f(y) \ne 0$, then there is $g \in {\oo'}^*$ such that $gy \notin u^l V$,
 and hence $f(gy) \ne 0$. The proposition is proved.
\end{proof}

\bigskip

The group $G = \widetilde{K^*}/ {\oo'}^*$ acts naturally on the space $\D'_{\oo}(K)^{{\oo'}^*}$.

\bigskip

\noindent
{\bf Problem.} \, How to describe the space $\D'_{\oo}(K)^{{\oo'}^*}$ and the action of the group $G$ on it?
(See also~\cite[\S 5.4 (v)]{P2}.)

\bigskip

We will describe the following $G$-invariant infinite-dimensional subspace of the space $\D'_{\oo}(K)^{{\oo'}^*}$.

Let $\varphi \in \D'(\bar{K})^{\oo_{\bar{K}}^*} = \D'(\oo / \oo(1))^{\oo_{\bar{K}}^*}$.
For any integer $l$ we {\em define} an element
\begin{equation}  \label{vp}
\varphi_l = t^l(\vp)  \otimes \mu_{B, 0, l+1}  \;  \in  \;  \D'_{\oo}(K) \mbox{,}
\end{equation}
where an element $t^l(\vp) \in \D'(\oo(l)/ \oo(l+1))$ is an application of the action of $t^l$ to $\vp$,
an element $\mu_{B, 0, l+1} \in \mu(\oo \mid \oo(l+1))$  was defined in Section~\ref{tdl}.
We have that $\vp_l \in \D'_{\oo}(K)^{{\oo'}^*}$, because
$\vp \in \D'(\oo / \oo(1))^{\oo_{\bar{K}}^*}$ and, by lemma~\ref{lem1}, $g \mu_{B, 0, l+1} = \mu_{B, 0, l+1}$ for any $g \in {\oo'}^*$,
and the transition maps in inductive limits in formula~\eqref{for2} are ${\oo'}^*$-equivariant maps.

\begin{defin} \label{Psi}
We define a $\C$-vector subspace $\Psi$ of the vector space $\D'_{\oo}(K)^{{\oo'}^*}$  as the $\C$-linear span of  elements $\vp_l$ for all elements
$\vp \in \D'(\bar{K})^{\oo_{\bar{K}}^*}$
and all integers $l$.
\end{defin}

Using en explicit isomorphism from the proof of Proposition~\ref{Heisen} and a
description of the action of $\widehat{K^*}$ on $\D'_{\oo}(K)$,
we obtain the following proposition.

\begin{prop}  \label{exten}
The space $\Psi$ is a $G$-invariant subspace with an explicit action:
$$
(a,b,c) \cdot \vp_l = q^{-c-a(l+1)} a(\vp)_{l+b}   \mbox{,}
$$
where $(a,b,c) \in G$ and $a(\vp) \in \D'(\bar{K})^{\oo_{\bar{K}}^*}$ is an application of the action of the element $u^a \in \bar{K}^*$
to the element $\vp$.
\end{prop}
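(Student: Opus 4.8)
The plan is to verify the stated formula by tracing the definition of $\vp_l$ in~\eqref{vp} through the explicit action of $\widehat{K^*}$ on $\D'_{\oo}(K)$, using the group isomorphism from Proposition~\ref{Heisen}. First I would record what the element $(a,b,c) \in G$ corresponds to inside $\widetilde{K^*}$ (and hence inside $\widehat{K^*}$): by map~\eqref{map} it is the class of the pair $(u^a t^b, q^{-c} \mu_{B,0,b})$. The $G$-invariance of $\Psi$ is then almost immediate once one checks that the action of this pair sends each generator $\vp_l$ back into the span of the $\vp_{l'}$; so the real content is computing the coefficient and the index shift, i.e. producing the formula $(a,b,c)\cdot \vp_l = q^{-c-a(l+1)} a(\vp)_{l+b}$.

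The key steps, in order, are as follows. Recall that $\vp_l = t^l(\vp) \otimes \mu_{B,0,l+1}$, where $t^l(\vp) \in \D'(\oo(l)/\oo(l+1))$ and $\mu_{B,0,l+1}\in \mu(\oo \mid \oo(l+1))$. I would first apply the geometric part $g = u^a t^b$ of the action recalled just before this proposition: the map $g(\;)$ carries $\D'(\oo(l)/\oo(l+1))$ to $\D'(g\oo(l)/g\oo(l+1)) = \D'(\oo(l+b)/\oo(l+b+1))$, since $u^a t^b \oo(i) = \oo(i+b)$, and simultaneously carries the measure factor $\mu_{B,0,l+1}$ to $g(\mu_{B,0,l+1}) \in \mu(g\oo \mid g\oo(l+1)) = \mu(\oo(b) \mid \oo(l+b+1))$. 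Next I would apply the tensoring-with-$\mu$ step of the action with $\mu = q^{-c}\mu_{B,0,b}$, which is exactly the measure component of our group element; by the composition isomorphism~\eqref{isom} this reassembles $\mu_{B,0,b} \otimes g(\mu_{B,0,l+1})$ into a measure in $\mu(\oo \mid \oo(l+b+1))$, as required so that the result again has the shape $(\cdots)\otimes \mu_{B,0,l+b+1}$ of some $\psi_{l+b}$.

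The heart of the computation — and the step I expect to be the main obstacle — is evaluating the two scalar contributions precisely. One contribution is the factor $q^{-c}$, which simply comes along from the group element. The other, and the delicate one, is the factor produced when $u^a$ acts on the Haar measure: from the relations recorded in the proof of Proposition~\ref{Heisen} we have $u(\mu_{B,r,s}) = q^{r-s}\mu_{B,r,s}$ and $t(\mu_{B,r,s}) = \mu_{B,r+1,s+1}$, so I would track $g(\mu_{B,0,l+1}) = u^a t^b(\mu_{B,0,l+1})$ carefully: the $t^b$ part only shifts indices $\mu_{B,0,l+1}\mapsto \mu_{B,b,l+b+1}$ with no scalar, while the $u^a$ part contributes $q^{a(0 - (l+1))} = q^{-a(l+1)}$. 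Combining with $\mu_{B,0,b}$ via~\eqref{isom} and normalizing back to $\mu_{B,0,l+b+1}$ should leave exactly the scalar $q^{-c-a(l+1)}$. Meanwhile the distributional factor becomes $t^b\bigl(u^a(t^l(\vp))\bigr)$; since on $\D'(\bar K)$ the action of $u^a \in \bar K^*$ is $\vp \mapsto a(\vp)$ and $t$ only transports between the graded pieces $\oo(l)/\oo(l+1)$, this is precisely $t^{l+b}(a(\vp))$, giving the index $l+b$ and the distribution $a(\vp)$. Assembling the measure scalar, the distribution factor, and the normalized measure yields $q^{-c-a(l+1)} a(\vp)_{l+b}$, establishing both the $G$-invariance of $\Psi$ and the claimed action formula. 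The one thing requiring genuine care is bookkeeping the commutation of the $u^a$- and $t^b$-actions against the central $q^{-c}$-twist so that the nonabelian cocycle in the group law $(a_1,b_1,c_1)(a_2,b_2,c_2) = (a_1+a_2, b_1+b_2, c_1+c_2+a_1 b_2)$ is reproduced consistently; I would check this by verifying that the stated formula defines an associative action, i.e. that applying $(a_1,b_1,c_1)$ after $(a_2,b_2,c_2)$ matches applying their product, which serves as an internal consistency check on the exponent arithmetic.
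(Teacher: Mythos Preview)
Your proposal is correct and follows essentially the same approach as the paper: both identify $(a,b,c)$ with the pair $(u^a t^b,\, q^{-c}\mu_{B,0,b})$ via Proposition~\ref{Heisen} and then perform the same direct computation, tracking the $t^b$-shift of indices, the scalar $q^{-a(l+1)}$ coming from $u^a$ acting on the measure factor, and the identification $u^a(t^l(\vp)) \leftrightarrow t^{l+b}(a(\vp))$ on the distribution factor. Your additional associativity check is a reasonable sanity test but is not needed, since the formula is derived from an actual group action.
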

\begin{proof}
By direct calculations we have
\begin{multline*}
(a,b,c) \cdot \vp_l = \left(u^a t^b, q^{-c} \mu_{B, 0,b}\right)  \cdot \left(  t^l(\vp)  \otimes \mu_{B, 0, l+1} \right)= \\ =
\left( t^{l+b}(u^a(\vp)), q^{-c}\mu_{B, 0,b} \otimes u^a(\mu_{B, b , b+l+1} \right)= \\=
\left( t^{l+b}(a(\vp)), q^{-c-a(l+1)}   \mu_{B, 0,b} \otimes \mu_{B, b , b+l+1}               \right) =
 q^{-c-a(l+1)} a(\vp)_{l+b}  \mbox{.}
\end{multline*}
The proposition is proved.

\end{proof}

\medskip

We recall the notion of the induced representation for discrete groups.
\begin{defin} \label{ind}
Let  $G_1 \subset G_2$ be groups,
and $\tau: G_1 \to \Aut_{\C} V$ be a representation of the group $G_1$ in a $\C$-vector space $V$.
A space of the induced representation ${\rm ind }^{G_2}_{G_1}(\tau)$ consists of all functions $f: G_2  \to V $ subject
to the following conditions:
\begin{itemize}
\item[1)]  $f(g_2g_1)= \tau(g_1^{-1}) f(g_2)$
for any elements $g_1 \in G_1$, $g_2 \in G_2$,
\item[2)] the support ${\rm Supp} (f)$ is contained in finite number of left cosets $G_2/G_1$.
\end{itemize}
An action of the group $G_2$ is the following: for any $f \in {\rm ind }^{G_2}_{G_1}(\tau)$ and $g \in G_2$ we have $(g \cdot f)(x)=f(g^{-1}x)$
where $x \in G_2$.
\end{defin}

We consider  subgroups $H =\Z $, $C =\Z$ and $P =\Z$ of the group $G$   given as
$$
H \ni a \mapsto (a,0,0) \in G    \mbox{,}  \qquad P \ni b \mapsto (0,b,0) \in G  \mbox{,}
\qquad
C \ni c \mapsto (0,0,c) \mbox{,} \qquad $$
We consider a representation
\begin{equation}  \label{sigma}
\sigma: H \oplus C \lrto \Aut\nolimits_{\C} (\D'(\bar{K})^{\oo_{\bar{K}}^*}) \mbox{,}  \qquad   \sigma(a \oplus c)(\vp)= q^{-c-a} a(\vp) \mbox{,}
\end{equation}
where $\vp \in \D'(\bar{K})^{\oo_{\bar{K}}^*} $ and $a(\vp) = u^a(\vp)  \in \D'(\bar{K})^{\oo_{\bar{K}}^*}$.

 The group $H \oplus C$ is a subgroup of the group $G$ via the map $a \oplus c  \mapsto (a, 0,c)$.
We note that the set of left cosets $G / (H \oplus C)$  can be  naturally identified with  the set of elements of the subgroup $P$.
By Definition~\ref{ind}, the space ${\rm ind}^G_{H \oplus C} (\sigma)$ can be identified with the space of functions with finite support from
$P$ to $\D'(\bar{K})^{\oo_{\bar{K}}^*}$. Therefore the space  ${\rm ind}^G_{H \oplus C} (\sigma)$ is linearly generated by elements $\Delta(m,\vp)$
where $m \in \Z$, $\vp \in  \D'(\bar{K})^{\oo_{\bar{K}}^*}$. Here $\Delta(m,\vp)$ is a function from $P = \Z$ to $\D'(\bar{K})^{\oo_{\bar{K}}^*}$
such that
\begin{itemize}
\item $\Delta(m,\vp)(n) = \vp$  if $m=n$,
\item  $\Delta(m,\vp)(n) = 0$ if $m \ne 0$.
\end{itemize}

We consider {\em a map} $\beta$ from the space ${\rm ind}^G_{H \oplus C} (\sigma)$ to the space $\Psi$ given as
$$\beta(\Delta(m,\vp))= \vp_m  \mbox{.}$$

\begin{Th}  \label{t1}
The map $\beta$ induces an exact sequence of $G$-representations
$$
0 \lrto \Ker \beta \lrto  {\rm ind}^G_{H \oplus C} (\sigma)   \stackrel{\beta}{\lrto} \Psi \lrto 0  \mbox{.}
$$
The subspace  $\Ker \beta$ has a basis which consists of elements
$$v_m = \Delta(m, \delta_0) - \Delta(m+1, \mu_{B,1,0}) \qquad (m \in \Z)$$
where $\delta_0$ is the delta-function at $0$, and $\mu_{B,1,0}  \in \mu(\bar{K}) \subset  \D'(\bar{K})^{\oo_{\bar{K}}^*} $. Moreover,
the space $\Ker \beta$
is an irreducible $G$-representation  which is isomorphic to $G$-representation
${\rm ind}^G_{H \oplus C} (\chi_0)  $ with the character $\chi_0: H \oplus C  \lrto \C^*$ given as \linebreak $\chi_0(a \oplus c)= q^{-c}$.
\end{Th}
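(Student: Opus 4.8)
The plan is to show that $\beta$ is a surjective homomorphism of $G$-representations, to prove that the relations $v_m$ lie in $\Ker\beta$, to prove they span it, and finally to analyze the resulting kernel directly.

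\textbf{Step 1 (equivariance, surjectivity, exactness).} First I would record the action of $G$ on the generators $\Delta(m,\vp)$. Writing $g=(a,b,c)$, using $g^{-1}=(-a,-b,ab-c)$ and the covariance condition of Definition~\ref{ind}, a direct computation gives $g\cdot\Delta(m,\vp)=q^{-c-a(m+1)}\,\Delta(m+b,\,a(\vp))$. Comparing with the formula of Proposition~\ref{exten} for the action on $\vp_m$ shows immediately that $\beta(g\cdot\Delta(m,\vp))=g\cdot\beta(\Delta(m,\vp))$, so $\beta$ is $G$-equivariant; it is surjective because $\Psi$ is by definition spanned by the $\vp_l=\beta(\Delta(l,\vp))$. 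Hence the displayed sequence is exact, and it remains to analyze $\Ker\beta$.

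\textbf{Step 2 (the relation $v_m\in\Ker\beta$).} The heart of the matter is the identity $(\delta_0)_m=(\mu_{B,1,0})_{m+1}$ in $\D'_{\oo}(K)$. I would prove it by pushing both sides into the common term $\D'(\oo(m)/\oo(m+2))\otimes_{\C}\mu(\oo\mid\oo(m+2))$ of the inductive system~\eqref{for2}. Under the $s$-transition (extension by zero along $\oo(m+1)/\oo(m+2)\hookrightarrow\oo(m)/\oo(m+2)$) the element $(\mu_{B,1,0})_{m+1}$ becomes the Haar measure of $\oo(m+1)/\oo(m+2)$ extended by zero; under the $r$-transition (the map dual to integration along the fibre of $\oo(m)/\oo(m+2)\to\oo(m)/\oo(m+1)$, combined with isomorphism~\eqref{isom} on the measure factors) the element $(\delta_0)_m$ becomes the fibre Haar measure over $0$, i.e. again a multiple of the Haar measure of $\oo(m+1)/\oo(m+2)$. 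The principal obstacle, which I expect to be the main technical work, is checking that the two normalizations coincide: this is precisely what the normalization of $\delta_0$ in Remark~\ref{nt1} (pairing $1$ with the characteristic function of $\oo$) and of $\mu_{B,1,0}$ (value $1$ on the $B$-lattice) are designed to guarantee, and it is verified by pairing against test functions. Granting this, $\beta(v_m)=(\delta_0)_m-(\mu_{B,1,0})_{m+1}=0$.

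\textbf{Step 3 (the $v_m$ span $\Ker\beta$).} Identify ${\rm ind}^G_{H\oplus C}(\sigma)$ with $\bigoplus_{l}\D'(\bar{K})^{\oo_{\bar{K}}^*}$. I would first note that each map $\vp\mapsto\vp_l$ is injective: it is injective into the single term $(l,l+1)$ of~\eqref{for2}, and the transition maps of that inductive system are injective (extension by zero, and pullback along surjections, are both injective). Now take $w=\sum_{l=a}^{b}\Delta(l,\vp^{(l)})\in\Ker\beta$ with lowest nonzero level $a$. Pushing $\beta(w)=0$ into the term $(a,b+1)$, every contribution of level $l>a$ is supported on $\oo(a+1)/\oo(b+1)=\{\text{bottom coordinate}=0\}$, while the level-$a$ contribution is the pullback of $t^a(\vp^{(a)})$ along $\oo(a)/\oo(b+1)\to\oo(a)/\oo(a+1)$. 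Pairing with test functions supported where the bottom coordinate is nonzero kills all higher levels and forces $t^a(\vp^{(a)})$ to be supported at the origin of $\bar{K}$; since a distribution on a totally disconnected group supported at a point is a scalar multiple of the delta-function, $\vp^{(a)}=c\,\delta_0$. Then $w-c\,v_a\in\Ker\beta$ has strictly higher lowest level, and an induction on $b-a$ (the single-level case being settled by the injectivity above) gives $w\in\mathrm{span}\{v_m\}$. The $v_m$ are visibly linearly independent, so this also yields the claimed basis.

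\textbf{Step 4 (structure of $\Ker\beta$).} Using Step~1 together with $u(\delta_0)=\delta_0$ and $u(\mu_{B,1,0})=q\,\mu_{B,1,0}$ (Example~\ref{example1}), I would compute $(a,b,c)\cdot v_m=q^{-c-a(m+1)}\,v_{m+b}$. In particular the centre $C$ acts by the scalar $q^{-c}$, the generator $\gamma=(0,1,0)$ of $P$ sends $v_m$ to $v_{m+1}$, and $\eta=(1,0,0)$ acts diagonally with the pairwise distinct eigenvalues $q^{-(m+1)}$. Irreducibility follows at once: a nonzero $G$-subrepresentation is $H$-stable, hence a sum of the one-dimensional $\eta$-eigenspaces $\C\,v_m$, and being $P$-stable it must contain all of them. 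Finally, the action formula identifies $\Ker\beta$ with the representation induced from the character $a\oplus c\mapsto q^{-c-a}$ of $H\oplus C$; since this character agrees on the centre $C$ with $\chi_0$, the two induced representations are isomorphic (the standard fact for the Heisenberg group, cf.~\cite{P1}), which gives $\Ker\beta\cong{\rm ind}^G_{H\oplus C}(\chi_0)$.
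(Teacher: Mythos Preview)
Your proposal is correct and follows essentially the same route as the paper: the same equivariance computation for $\beta$, the same support-then-induction argument for spanning $\Ker\beta$, and an irreducibility argument (eigenspace decomposition under $H$, then transitivity of $P$) that is just an abstract rephrasing of the paper's explicit Vandermonde calculation.

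The one loose point is in your Step~4. For the \emph{discrete} Heisenberg group it is \emph{not} true that two characters of $H\oplus C$ agreeing on the centre $C$ automatically induce isomorphic representations; this is precisely the content of item~\ref{it4} of Theorem~\ref{main}, which says the $H$-parts must differ by an integral power of $q$. Your two characters $a\oplus c\mapsto q^{-c-a}$ and $\chi_0\colon a\oplus c\mapsto q^{-c}$ happen to satisfy this (the ratio of $H$-parts is $q^{-a}$), so the conclusion is correct, but the justification as stated is not. The paper avoids the issue by writing down the isomorphism explicitly: the basis shift $\tilde v_m=v_{m-1}$ converts the action $(a,b,c)\cdot v_m=q^{-c-a(m+1)}v_{m+b}$ into $(a,b,c)\cdot\tilde v_m=q^{-c-am}\tilde v_{m+b}$, which is exactly ${\rm ind}^G_{H\oplus C}(\chi_0)$.
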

\begin{proof}
We check that the map $\beta$ is a map of $G$-representations.
Let $g =(a,b,c) \in G$. We have $g^{-1}= (-a, -b, ab-c)$. Using the first condition from definition~\ref{ind} we extend a function
$\Delta(m,\vp)$ to a function from $G$ to  $\D'(\bar{K})^{\oo_{\bar{K}}^*}$.
We have for any $g =(a,b,c)$ and any integer $k$
\begin{multline} \label{ind-form}
(g \cdot \Delta(m,\vp))((0,k,0)) = \Delta(m,\vp))(g^{-1} \cdot (0,k,0))= \\
= \Delta(m,\vp)( (-a, -b, ab-c) (0,k,0))= \Delta(m,\vp) ((-a, -b+k, ab-ak-c)) = \\
= \Delta(m,\vp) ((0,-b+k, 0) (-a, 0, ab-ak-c)) = \\ = \sigma\big(a \oplus (c+ak-ab)\big) \big( \Delta(m,\vp) ((0,-b+k, 0)) \big)= \\
= q^{-c-a(k-b+1)} a\big( \Delta(m,\vp) ((0,-b+k, 0) ) \big) \mbox{.}
\end{multline}
Now if $k =b+m$ then the last formula is equal to $q^{-c-a(m+1)} a(\vp)$. If $k \ne b+m$, then the last formula is equal to $0$.
Hence we have
\begin{equation}  \label{ind-form-2}
(a,b,c) \cdot \Delta(m,\vp) = q^{-c-a(m+1)} \Delta(b+m, a(\vp))  \mbox{.}
\end{equation}
Therefore using the formula from Proposition~\ref{exten} we obtain that the map $\beta$ commutes with the action of $G$.

The surjectivity  of the map $\beta$ follows from the definition of this map. We calculate the kernel of $\beta$. We have from the construction that elements $v_m  $ ($m \in \Z$) belong to the space $\Ker \beta$. It is also clear that elements $v_m$  ($m \in \Z$) are linearly independent over $\C$. Let an element $\sum\limits_{i=1}^n \Delta(m_i, \vp_i)   $ belongs to $\Ker \beta$, where
$ m_i < m_{i+1} $ for $ 1 \le i \le (n-1)$, and $\vp_i \in \D'(\bar{K})^{\oo_{\bar{K}}^*}$ for $1 \le i \le n$. Hence we obtain
\begin{equation}  \label{equat}
\sum\limits_{i=1}^n (\vp_i)_{m_i}=0
\end{equation}
in the space $\Psi$. Moreover,  equality~\eqref{equat} is satisfied  in the space
$$\D'(\oo(m_1)/ \oo(m_n +1)) \otimes_{\C} \mu(\oo(m_n+1) \mid \oo) \mbox{,}$$
 since the transition maps in limits in formula~\eqref{for2} are injective.
We fix $\mu_{B, m_n+1, 0}  \in \mu(\oo(m_n+1) \mid \oo)$. After that we can consider an analog of equality~\eqref{equat}
in $\D'(\oo(m_1)/ \oo(m_n +1))$. In the last equality we consider the supports of summands, where every summand is an   element
of $\D'(\oo(m_1)/ \oo(m_n +1))$ (see, for example,  the definition of the support of distribution in~\cite[Def.~4]{OsipPar1}).
From this equality we obtain  that ${\rm supp} (\vp_1)=\{0\}$ as a subset in $\bar{K} = \oo/\oo(1)$. Therefore $1(\vp_1)=\vp_1$, and from Example~\ref{example1} we
 obtain that $(\vp_1)_{m_1}= e \cdot (\delta_0)_{m_1}$, where $e \in \C$.
Now we use $(\delta_0)_{m_1} = (\mu_{B,1,0})_{m_1+1}$. From induction on the length $m_{n+1} -m_1$ we obtain that the space $\Ker \beta$
is linearly generated by elements $v_m$ with $m \in \Z$.

From formula~\eqref{ind-form-2} we have $(a,b,c) \cdot v_m= q^{-c-a(m+1)}v_{m+b}$  for any ${(a,b,c) \in G}$ and $m \in \Z$.  Using calculations
analogous to~\eqref{ind-form}, it is easy to see that
this action corresponds to the action of $G$ on
${\rm ind}^G_{H \oplus C} (\chi)$ (if we consider elements $v_m$ as functions $\delta_m$   from the subgroup $P$ to $\C$ supported on $m \in \Z$), where the character $\chi$ is given as $\chi(a\oplus c)= q^{-c-a}$. If we shift the basis: $\tilde{v}_m= v_{m-1}$, then the action is
\begin{equation}  \label{act_v_t}
(a,b,c) \cdot \tilde{v}_m =  q^{-c-am} \tilde{v}_{m+b}  \mbox{.}
\end{equation}
The last action corresponds to the action of $G$ on ${\rm ind}^G_{H \oplus C} (\chi_0)$,  where the character $\chi_0$ is given as $\chi_0(a\oplus c)= q^{-c}$. Thus, $G$-representations ${\rm ind}^G_{H \oplus C} (\chi)$ and ${\rm ind}^G_{H \oplus C} (\chi_0)$
are isomorphic.

We explain the irreducibility of $G$-representation ${\rm ind}^G_{H \oplus C} (\chi_0)$ (see another argument in~\cite[Prop.~1]{P1}).
Consider any nonzero element
$$x = \sum_{i=1}^n \alpha_{i} \tilde{v}_{m_i} \; \in  \; \Ker \beta  \mbox{,}$$ where
 $m_i \in \Z$
 and
$\alpha_{i} \in \C^*$
 for $1 \le i \le n$, besides $m_k < m_{k+1}$ for $1 \le k \le n-1$. We will prove that $\C[G] \cdot x = \Ker \beta$.
 Consider a column $Y= [y_1; \ldots ; y_n]$
 whose entries $y_j =(-j+1, 0,0) \cdot x$ with $1 \le j \le n$ are elements of the space $\Ker \beta$. From formula~\eqref{act_v_t}
 we obtain that $Y = A Z$, where the column $Z= [z_1; \ldots ; z_n]$ has entries $z_i = \alpha_i \tilde{v}_{m_i}$, where $1 \le i \le n$,  and the entries of the square matrix $A$
 are as follows: $A_{j,i}= q^{(j-1)m_i}$, where $1 \le j \le n$ and $1 \le i \le n$. The matrix $A$ is the Vandermonde matrix with non-zero determinant equal to
 $$\prod_{1 \le j_1 < j_2 \le n} (q^{m_{j_2}} - q^{m_{j_1}}) \mbox{.}$$
 Therefore there is the inverse matrix $A^{-1}$
 with entries from the field $\Q$. Hence we obtain $Z = A^{-1} A Z = A^{-1} Y$. Therefore any element $z_i = \alpha_i \tilde{v}_{m_i}$, where $1 \le i \le n$, is a linear combination (with coefficients from $\Q$) of elements  $y_j =(-j+1, 0,0) \cdot x$, where $1 \le j \le n$.
 Hence any element $\tilde{v}_{m_i}$, where $1 \le i \le n$,  belongs to the space $\C[G] \cdot x$. By formula~\eqref{act_v_t}, the action by means of  an element
 $(0,b,0)$, where $b \in \Z$, maps an element $\tilde{v}_k$  to the element $\tilde{v}_{k+b}$, where $k \in \Z$. Therefore, we obtain that  any element $\tilde{v}_i$ with $i \in \mathbb{Z}$ belongs to the space $\C[G] \cdot x$.
 The theorem is proved.
 \end{proof}

\bigskip
We consider subrepresentations $\sigma_1 \subset \sigma_2$ of the representation $\sigma$ (see formula~\eqref{sigma} and formula~\eqref{form1}):
$$
\sigma_1: H \oplus C \lrto \Aut\nolimits_{\C} ( \D(\bar{K})^{\oo_{\bar{K}}^*} \otimes_{\C} \mu(\bar{K})   )   \mbox{,}
$$
$$
\sigma_2 : H \oplus C \lrto \Aut\nolimits_{\C} ( \E(\bar{K})^{\oo_{\bar{K}}^*} \otimes_{\C} \mu(\bar{K})   ) \mbox{.}
$$
We have the following embeddings of representations of the group $G$:
$$
{\rm ind}^G_{H \oplus C} (\sigma_1)  \; \subset \;  {\rm ind}^G_{H \oplus C} (\sigma_2) \; \subset \; {\rm ind}^G_{H \oplus C} (\sigma)  \mbox{.}
$$

\begin{prop}
The map $\beta$ restricted to ${\rm ind}^G_{H \oplus C} (\sigma_2)$ is an isomorphism with its image in $\Psi$.
\end{prop}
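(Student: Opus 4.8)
The plan is to reduce the statement to the injectivity of $\beta$ on ${\rm ind}^G_{H \oplus C}(\sigma_2)$. Indeed, $\beta$ is already a morphism of $G$-representations (this was checked in the proof of Theorem~\ref{t1}), so its restriction to the subrepresentation ${\rm ind}^G_{H \oplus C}(\sigma_2)$ is a $G$-morphism onto its image, and any injective morphism of $G$-representations is automatically an isomorphism onto its image. Hence it is enough to prove that $\Ker \beta \cap {\rm ind}^G_{H \oplus C}(\sigma_2) = 0$.

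First I would invoke the explicit basis of $\Ker\beta$ furnished by Theorem~\ref{t1}. Write $W_2 := \E(\bar{K})^{\oo_{\bar{K}}^*} \otimes_{\C} \mu(\bar{K})$, viewed as a $u$-stable subspace of $\D'(\bar{K})^{\oo_{\bar{K}}^*}$ through~\eqref{form1}, so that ${\rm ind}^G_{H \oplus C}(\sigma_2)$ is spanned by the elements $\Delta(k,\psi)$ with $k \in \Z$ and $\psi \in W_2$. A general element of $\Ker\beta$ is a finite sum $\sum_m \lambda_m v_m$, and regrouping by the index $k \in P$ (using linearity of $\Delta(k,\cdot)$ in the second argument) gives
\[
\sum_m \lambda_m v_m = \sum_k \Delta\big(k,\ \lambda_k \delta_0 - \lambda_{k-1}\mu_{B,1,0}\big).
\]
This element lies in ${\rm ind}^G_{H \oplus C}(\sigma_2)$ if and only if each coefficient $\lambda_k \delta_0 - \lambda_{k-1}\mu_{B,1,0}$ belongs to $W_2$. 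Since $\mu_{B,1,0} \in \mu(\bar{K}) \subseteq W_2$, this forces $\lambda_k \delta_0 \in W_2$ for every $k$. Thus the entire problem reduces to the single fact that $\delta_0 \notin W_2$: granting it, if $\sum_m\lambda_m v_m \neq 0$ one takes the smallest index $m_0$ with $\lambda_{m_0} \neq 0$, and the coefficient at $k = m_0$ equals $\lambda_{m_0}\delta_0$ (as $\lambda_{m_0-1}=0$), giving $\lambda_{m_0}\delta_0 \in W_2$, a contradiction. Hence all $\lambda_m$ vanish and $\Ker \beta \cap {\rm ind}^G_{H \oplus C}(\sigma_2) = 0$.

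The main point, and the only nontrivial step, is therefore to prove that $\delta_0 \notin W_2$, which I would establish by an eigenvalue argument for the action of the local parameter $u \in \bar{K}^*$ (equivalently of $1 \in \Z \simeq \bar{K}^*/\oo_{\bar{K}}^*$). By Example~\ref{example1} and Proposition~\ref{alpha}, $\delta_0 = g_1$ satisfies $u(\delta_0) = \delta_0$, so it is a nonzero eigenvector of eigenvalue $1$. On the other hand, $W_2$ is $u$-stable by~\eqref{form1}, and under the identification $W_2 \cong \E(\bar{K})^{\oo_{\bar{K}}^*}$ (the factor $\mu(\bar{K})$ being one-dimensional) the operator $u$ acts as $q$ times the shift operator on $\E(\bar{K})^{\oo_{\bar{K}}^*}$, since $u$ scales $\mu(\bar{K})$ by the factor $q$. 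By the description of $\E(\bar{K})^{\oo_{\bar{K}}^*}$ in Section~\ref{sec2}, the only eigenvectors of the shift are the constant functions, with eigenvalue $1$; consequently every eigenvector of $u$ in $W_2$ has eigenvalue $q$. As $q \neq 1$ and $W_2$ is $u$-stable, the eigenvector $\delta_0$ of eigenvalue $1$ cannot lie in $W_2$. This yields $\delta_0 \notin W_2$ and completes the proof.
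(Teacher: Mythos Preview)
Your proof is correct and follows essentially the same approach as the paper: reduce to $\Ker\beta \cap {\rm ind}^G_{H\oplus C}(\sigma_2)=0$, invoke the explicit basis $\{v_m\}$ of $\Ker\beta$ from Theorem~\ref{t1}, and conclude from the fact that $\delta_0 \notin \E(\bar K)^{\oo_{\bar K}^*}\otimes_\C \mu(\bar K)$. The paper simply asserts this last fact, whereas you supply a justification via the eigenvalue argument (the same one used later in the proof of item~\ref{it2} of Theorem~\ref{main}); your regrouping and smallest-index step are a slightly more explicit version of the paper's one-line claim.
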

\begin{proof}
From theorem~\ref{t1} we have an explicit description of elements $v_m$ ($m \in \Z$) which form a basis in the space $\Ker \beta$.
Since the element $\delta_0  $  from the space $\D'(\bar{K})$ does not belong to the subspace $\E(\bar{K}) \otimes_{\C } \mu(\bar{K})$,
we obtain that any non-zero element $\sum_{i=1}^n a_i v_i  $ ($a_i \in \C$) does not belong to   ${\rm ind}^G_{H \oplus C} (\sigma_2)$. The proposition is proved.
\end{proof}

\medskip

Let $\alpha \in \C^*$. We recall that the $1$-dimensional $\C$-vector subspace $\Phi_{\alpha}$ of the space $\D'(\bar{K})^{\oo_{\bar{K}}^*} $  was introduced in Proposition~\ref{alpha}.
We consider a subrepresentation $\sigma_{\alpha}$ of the representation $\sigma$ (see formula~\eqref{sigma}):
$$
\sigma_{\alpha}: H \oplus C \lrto \Aut\nolimits_{\C} ( \Phi_{\alpha}   ) =\C^*  \mbox{,} \qquad
\sigma_{\alpha} (a \oplus c)  \longmapsto q^{-c-a} \alpha^a  \mbox{.}
$$
We define a $G$-subrepresentation  $\Psi_{\alpha}= {\rm ind}^G_{H \oplus C} (\sigma_{\alpha}) $ of $G$-representation ${\rm ind}^G_{H \oplus C} (\sigma) $.
We consider the element $g_{\alpha} \in \Phi_{\alpha}$ (see Remark~\ref{nt1}). Now elements $\Delta(m, g_{\alpha})$ ($m \in \Z$) form a basis of the subspace $\Psi_{\alpha}$
inside the space  ${\rm ind}^G_{H \oplus C} (\sigma)$. From formula~\eqref{ind-form-2}  we have the following action of the group $G$ on these elements:
\begin{equation}  \label{ind-form-3}
(a,b,c) \cdot \Delta(m, g_{\alpha}) = q^{-c-a(m+1)} \alpha^a \Delta(b+m, g_{\alpha})  \mbox{.}
\end{equation}
\begin{Th}  \label{main}
$G$-representations $\Psi_{\alpha}$ have the following properties.
\begin{enumerate}
\item \label{it1} For any $\alpha \in \C^*$ the space $\Psi_{\alpha}$ is an irreducible $G$-representation, and the map $\beta$ restricted to
$\Psi_{\alpha}$ is an isomorphism with its image in $\Psi$.
\item \label{it2} If $\alpha =q$, then $\Psi_{\alpha }  \subset  {\rm ind}^G_{H \oplus C} (\sigma_2) $. If $\alpha \ne q$,
then $\Psi_{\alpha}  \cap \, {\rm ind}^G_{H \oplus C} (\sigma_2) =\{0\}$ inside ${\rm ind}^G_{H \oplus C} (\sigma) $.
\item \label{it3} We have $\beta(\Psi_1)= \beta(\Psi_q)$ as subspaces in the space $\Psi$. For any pair $(\alpha_1, \alpha_2) \in \C^* \times \C^*$ such that $\alpha_1 \ne \alpha_2$,
$(\alpha_1, \alpha_2) \ne (q,1)$ and $(\alpha_1, \alpha_2) \ne (1,q) $ we have
$\beta(\Psi_{\alpha_1}) \cap  \beta(\Psi_{\alpha_2}) =\{0\}$
 inside the space $\Psi$.
 \item We have embeddings of $G$-representations:
 \begin{equation}  \label{dirsum}
 \bigoplus_{\alpha \in \C^*} \Psi_{\alpha}   \; \subset \; {\rm ind}^G_{H \oplus C} (\sigma) \qquad \mbox{and} \qquad
 \mathop{\bigoplus_{\alpha \in \C^*}}_{\alpha \ne 1} \beta(\Psi_{\alpha})   \; \subset \; \Psi  \mbox{.}
 \end{equation}
\item \label{it4} For any $\alpha_1$ and $\alpha_2$ from $\C^*$ we have that the $G$-representations $\Psi_{\alpha_1}$ and $\Psi_{\alpha_2}$
are isomorphic if and only if $\alpha_1 = q^l \alpha_2$ for some integer $l$. Thus the $G$-representation $\Psi$ contains  irreducible
$G $-representations which are parametrized by the elliptic curve $\C^* / q^{\Z}$. Any such irreducible $G$-representation is included with at least countable multiplicity in the $G$-representation $\Psi$.
\end{enumerate}
\end{Th}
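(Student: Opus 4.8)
The plan is to establish the five items in turn, in each case reducing to the explicit action formula~\eqref{ind-form-3} together with the description of $\Phi_\alpha$, of $g_\alpha$ (Proposition~\ref{alpha}, Example~\ref{example1}), and of $\Ker\beta$ (Theorem~\ref{t1}). For irreducibility in item~1 I would mimic the Vandermonde argument of Theorem~\ref{t1}: given a nonzero $x=\sum_{i=1}^n\beta_i\Delta(m_i,g_\alpha)$ with distinct $m_i$, the elements $(a,0,0)$ for $a=0,\dots,n-1$ act on $\Delta(m_i,g_\alpha)$ by the scalar $\alpha^a q^{-a(m_i+1)}$ by~\eqref{ind-form-3}; since the numbers $q^{-(m_i+1)}$ are pairwise distinct, the corresponding system is Vandermonde with nonzero determinant, so each $\beta_i\Delta(m_i,g_\alpha)$ lies in $\C[G]\cdot x$, and the shifts $(0,b,0)$ then produce every $\Delta(m,g_\alpha)$, giving $\C[G]\cdot x=\Psi_\alpha$. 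Since $\beta$ is $G$-equivariant (Theorem~\ref{t1}), $\Ker\beta\cap\Psi_\alpha$ is a subrepresentation of the irreducible $\Psi_\alpha$; it is proper because $\beta(\Delta(0,g_\alpha))=(g_\alpha)_0\ne0$ (the maps $\varphi\mapsto\varphi_l$ are injective, the transition maps in~\eqref{for2} being injective), so $\beta|_{\Psi_\alpha}$ is injective.

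\emph{Item 2.} The subspace $\E(\bar K)^{\oo_{\bar K}^*}\otimes_{\C}\mu(\bar K)\subset\D'(\bar K)^{\oo_{\bar K}^*}$ is stable under multiplication by $u$, which acts on $\mu(\bar K)$ by $q$ and whose only eigenvectors on $\E(\bar K)^{\oo_{\bar K}^*}$ are the constants, of eigenvalue $1$ (Section~\ref{sec2}). Hence the only $u$-eigenvalue occurring in $\E(\bar K)^{\oo_{\bar K}^*}\otimes_{\C}\mu(\bar K)$ is $q$, with eigenline $\mu(\bar K)=\Phi_q$. As $g_\alpha$ spans $\Phi_\alpha$, it lies in $\E(\bar K)^{\oo_{\bar K}^*}\otimes_{\C}\mu(\bar K)$ if and only if $\alpha=q$; for $\alpha=q$ this yields $\Psi_q\subset{\rm ind}^G_{H\oplus C}(\sigma_2)$, while for $\alpha\ne q$ no nonzero combination $\sum_i\beta_i\Delta(m_i,g_\alpha)$ lies in ${\rm ind}^G_{H\oplus C}(\sigma_2)$, giving $\Psi_\alpha\cap{\rm ind}^G_{H\oplus C}(\sigma_2)=\{0\}$.

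\emph{Items 3 and 4.} The key observation is that, by Example~\ref{example1}, $g_1=\delta_0$ and $g_q=\mu_{B,1,0}$, so the basis vectors $v_m=\Delta(m,\delta_0)-\Delta(m+1,\mu_{B,1,0})$ of $\Ker\beta$ lie in $\Psi_1\oplus\Psi_q$, and the $\Psi_1$-component $\sum_m a_m\Delta(m,g_1)$ of a nonzero $\sum_m a_m v_m$ is nonzero. Since distinct $g_\alpha$ are linearly independent in $\D'(\bar K)^{\oo_{\bar K}^*}$ (distinct $u$-eigenvectors), the family $\{\Delta(m,g_\alpha)\}$ is linearly independent across all $\alpha$ and $m$, which gives the internal direct sum $\bigoplus_\alpha\Psi_\alpha$ inside ${\rm ind}^G_{H\oplus C}(\sigma)$, the first embedding of~\eqref{dirsum}. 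For the intersection statement suppose $\beta(x_1)=\beta(x_2)$ with $x_i\in\Psi_{\alpha_i}$ and $\alpha_1\ne\alpha_2$; then $x_1-x_2\in\Ker\beta\subset\Psi_1\oplus\Psi_q$ has, in $\bigoplus_\alpha\Psi_\alpha$, components only in $\Psi_1$ and $\Psi_q$. If $\{\alpha_1,\alpha_2\}\ne\{1,q\}$ then at least one $\alpha_i\notin\{1,q\}$, forcing the corresponding component $x_i$ to vanish and hence $\beta(x_1)=\beta(x_2)=0$; thus $\beta(\Psi_{\alpha_1})\cap\beta(\Psi_{\alpha_2})=\{0\}$. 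The excluded case $\{\alpha_1,\alpha_2\}=\{1,q\}$ gives $\beta(\Psi_1)=\beta(\Psi_q)$ directly from $(\delta_0)_m=(\mu_{B,1,0})_{m+1}$. Finally, for a finite relation $\sum_i\beta(x_i)=0$ with all $\alpha_i\ne1$, the element $\sum_i x_i\in\Ker\beta$ has vanishing $\Psi_1$-component, which forces all coefficients $a_m$ to vanish; hence $\sum_i x_i=0$ and, by the direct sum over $\alpha$, every $x_i=0$, yielding the second embedding $\bigoplus_{\alpha\ne1}\beta(\Psi_\alpha)\subset\Psi$.

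\emph{Item 5, and the main obstacle.} For the implication $\alpha_1=q^l\alpha_2\Rightarrow\Psi_{\alpha_1}\cong\Psi_{\alpha_2}$ I would exhibit the intertwiner $\Delta(m,g_\alpha)\mapsto\Delta(m+l,g_{q^l\alpha})$ and check $G$-equivariance against~\eqref{ind-form-3} on $(1,0,0)$, $(0,1,0)$, $(0,0,1)$; the index shift by $l$ is exactly what reconciles the $(1,0,0)$-eigenvalues $q^{-(m+1)}\alpha$ and $q^{-(m+l+1)}q^l\alpha$. For the converse I would use the set of eigenvalues of $(1,0,0)$ as an isomorphism invariant: on $\Psi_\alpha$ this operator is diagonal in the basis $\{\Delta(m,g_\alpha)\}$ with simple spectrum $\{q^{-(m+1)}\alpha:m\in\Z\}=\alpha q^{\Z}$, and a $G$-isomorphism carries $(1,0,0)$-eigenvectors to $(1,0,0)$-eigenvectors with equal eigenvalue, so $\Psi_{\alpha_1}\cong\Psi_{\alpha_2}$ forces $\alpha_1 q^{\Z}=\alpha_2 q^{\Z}$, i.e. $\alpha_1=q^l\alpha_2$. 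The isomorphism classes are therefore indexed by $\C^*/q^{\Z}$, which is an elliptic curve; for a fixed class, the countably many subspaces $\beta(\Psi_{\alpha'})$ with $\alpha'\in\alpha q^{\Z}$, $\alpha'\ne1$, are pairwise disjoint and in direct sum by item~4, giving countable multiplicity in $\Psi$. I expect the main obstacle to be the bookkeeping in item~3: one must pin down the normalizations $g_1=\delta_0$ and $g_q=\mu_{B,1,0}$ so that $\Ker\beta$ lands inside $\Psi_1\oplus\Psi_q$, and then run the component comparison carefully enough to see that $\{1,q\}$ is the unique coincidence.
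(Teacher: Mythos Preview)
Your proposal is correct and follows essentially the same route as the paper: the same Vandermonde argument for irreducibility, the same eigenvector analysis of $\E(\bar K)^{\oo_{\bar K}^*}\otimes\mu(\bar K)$ for item~2, the same use of $\Ker\beta\subset\Psi_1\oplus\Psi_q$ (via $g_1=\delta_0$, $g_q=\mu_{B,1,0}$) together with the linear independence of the $g_\alpha$ for items~3--4, and the same weight/spectrum comparison for $(1,0,0)$ in item~5. The only noticeable variation is in item~1, where the paper deduces $\Ker\beta\cap\Psi_\alpha=0$ from the irreducibility of $\Ker\beta$ (showing $v_m\notin\Psi_\alpha$), whereas you deduce it from the irreducibility of $\Psi_\alpha$ (showing $\beta|_{\Psi_\alpha}\ne0$); both are immediate and your version is arguably the more direct.
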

\begin{proof}
\begin{enumerate}
\item  The space $\Psi_{\alpha}$ is an irreducible $G$-representation by
an argument which is similar to the end of the proof of Theorem~\ref{t1} (see  another argument in~\cite[Prop.~1]{P1}). We prove that $\Ker \beta \cap \Psi_{\alpha} =0$.
By Theorem~\ref{t1} the $G$-representation $\Ker \beta$ is irreducible. Therefore it is enough to prove that an element $v_m$ from $\Ker \beta$
does not belong to $\Psi_{\alpha}$. The latter follows from the fact that $\delta_0 \in \Phi_0$  and $\mu_{B,1,0} \in \Phi_q$
(see Example~\ref{example1}).
\item We have $\Psi_q  \subset {\rm ind}^G_{H \oplus C} (\sigma_2)$, since the space $\Phi_q$ is equal to the space $\mu(\bar{K})$ of $\C$-valued Haar measures on $\bar{K}$ (see Example~\ref{example1}). The subspace of all constant functions on $\bar{K}$ is the set of all eigenvectors in $\E(\bar{K})^{\oo_{\bar{K}^*}}$ with respect to the action of local parameter $u \in \bar{K}^*$ (or $1 \in \Z$). Therefore if $\alpha \ne q$,
    then ${\Phi_{\alpha }  \cap (\E(\bar{K})^{\oo_{\bar{K}}^*} \otimes_{\C} \mu(\bar{K}) ) =\{0\}}$. Hence if $\alpha \ne q$, then
    ${\Psi_{\alpha}  \cap \, {\rm ind}^G_{H \oplus C} (\sigma_2) =\{0\}}$.
 \item  This item follows from explicit description of basis vectors $v_m$ ($m \in \Z$) of the space $\Ker \beta$ (see Theorem~\ref{t1}),
 from description of basis vectors of the space $\Psi_{\alpha}$,
 and from assertion~\ref{it1}.
 \item Elements $\Delta(m, g_{\alpha})$ ($m \in \Z$, $\alpha \in \C^*$) from ${\rm ind}^G_{H \oplus C} (\sigma)$
 are linearly independent over $\C$. (Indeed, it is enough to see that elements $g_{\alpha}$ with different $\alpha \in \C^*$ are linearly independent in $\D'(\bar{K})$. This is true, because these elements are eigenvectors with different eigenvalues of linear operator induced by multiplication action of local parameter $u \in \bar{K}^*$.) These elements form a basis of the space
 $\bigoplus\limits_{\alpha \in \C^*} \Psi_{\alpha}$.
 Thus we obtained the first formula in~\eqref{dirsum}. We note that
 $$
 \Ker \beta \; \subset \; (\Psi_1 \oplus \Psi_q)  \qquad \mbox{and}  \qquad  (\Psi_1 \oplus \Psi_q)/ \Ker \beta \;  = \; \beta(\Psi_q) \mbox{.}
 $$
 Hence and from the first formula in~\eqref{dirsum} we obtain the second formula in~\eqref{dirsum}.
 \item \label{last-item}
 For this item we use the previous items of the theorem. Now we prove that $G$-representations $\Psi_{\alpha_1}$ and $\Psi_{\alpha_2}$
are isomorphic if and only if $\alpha_1 = q^l \alpha_2$ for some integer $l$ (see also~\cite[Prop.~1]{P1} and~\cite[Th.~5]{AP}).
If $\alpha_1 = q^l \alpha_2$, then this isomorphism is constructed by a shift of  the elements of the basis:
${\Delta(m, g_{\alpha_1}) \mapsto \Delta(m-l, g_{\alpha_2})}$ (see formula~\eqref{ind-form-3}). Suppose that above two representations are isomorphic. We note that elements of the one-dimensional vector spaces $\C \cdot \Delta(m, g_{\alpha_i})$ are the only eigenvectors of elements of the subgroup $H$ under the action on $\Psi_{\alpha_i}$. (In other words,   we have the weight decomposition of $\Psi_{\alpha_i}$ in above one-dimensional vector spaces with respect to the subgroup $H$.) Therefore the isomorphism of $G$-representations maps the element  $\Delta(0, g_{\alpha_1})$ to the element  $d \Delta(-l, g_{\alpha_2})$
for certain integer $l$ and $d \in \C^*$. The action of the element $(1,0,0)$ from $G$ on above two elements by formula~\eqref{ind-form-3} gives $\alpha_1 = q^l \alpha_2$.

The theorem is proved.
 \end{enumerate}
\end{proof}

\section{Fourier transform}

Let $L$ be a one-dimensional local field  with a finite residue field $\df_q$ (see Section~\ref{sec2}). Let $u$ be a local parameter in $L$. Then the additive group of the field $L$ is
 identified with its dual Pontryagin group via the pairing:
$$
\chi \, : \, L \times L \lrto \mathbb{T} \mbox{,} \qquad  \qquad  \chi(l_1, l_2)  = \exp(2 \pi i \cdot \lambda_L (l_1 l_2))  \mbox{,}
$$
where $\mathbb{T}$ is the circle group, i.e. the unit circle in $\C$, and the map $\lambda_L : L \to \dq/\Z$ is defined as following.  If $L \simeq \df_q((u))$, then for an element $l=\sum a_i u^i  $ from $L$ (where $a_{i} \in \df_q$) we define $\lambda_L (l)= \tr_{\df_q/ \df_p} (a_{-1}) \in p^{-1}\Z/\Z$. If $L$ is a finite extension of $\dq_p$, then the map $\lambda_L$ is the composition of maps:
$$
L \stackrel{\tr_{L / \dq_p}}{\lrto} \dq_p  \lrto \dq_p/\Z_p  \lrto \dq/\Z  \mbox{.}
$$

For any $\mu \in \mu(L) \subset \D'(L)$, $\mu \ne 0$ we have the Fourier transform
$$\F_{\mu} \; : \; \D(L) \to \D(L) \mbox{,} \qquad \qquad (\F_{\mu}(f))(y)= \mu(f \cdot \chi_{-y}) )   \mbox{,}
$$
where $f \in \D(L)$, $y \in L$, and $\chi_{-y}$ is a locally constant function on $L$ defined as $\chi_{-y}(x)= \chi(-y,x)$ for $x \in L$.
As the dual map to the above map we have the Fourier transform  on the distribution space
$$ \F_{\mu^{-1}} \; :  \; \D'(L)  \to \D'(L)  \mbox{.}$$

By direct calculations, for any $l \in L^*$ and $f \in \D(L)$ we have
$$\F_{\mu} (l(f))= (\mu/l(\mu)) \cdot l^{-1}(\F_{\mu}(f))  \mbox{,}$$
where $(\mu/ l(\mu)) \in \C^*$. Hence for any $G \in \D'(L)$ we obtain
$$\F_{\mu^{-1}}(l(G))= (l(\mu)/\mu)  \cdot l^{-1}(\F_{\mu^{-1}}(G)) \mbox{.}$$
Hence we have  well-defined maps
$$\F_{\mu} : \D(L)^{\oo_L^*} \lrto  \D(L)^{\oo_L^*}  \mbox{,} \qquad  \F_{\mu^{-1}} : \D'(L)^{\oo_L^*}  \lrto \D'(L)^{\oo_L^*}  $$
and the following proposition about spaces $\Phi_{\alpha}$ (recall also Proposition~\ref{alpha}).
\begin{prop} \label{prop7}
For any $\alpha \in \C^*$ we have $\F_{\mu^{-1}} (\Phi_{\alpha}) = \Phi_{q \alpha^{-1}}$.
\end{prop}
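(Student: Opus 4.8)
The plan is to first establish the inclusion $\F_{\mu^{-1}}(\Phi_{\alpha}) \subseteq \Phi_{q\alpha^{-1}}$ by a direct eigenvalue computation, and then to upgrade this inclusion to an equality using the fact that both $\Phi_{\alpha}$ and $\Phi_{q\alpha^{-1}}$ are one-dimensional (Proposition~\ref{alpha}) together with the invertibility of the Fourier transform. The whole argument rests on the commutation relation $\F_{\mu^{-1}}(l(G))= (l(\mu)/\mu)\, l^{-1}(\F_{\mu^{-1}}(G))$ recorded just above the statement.

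For the inclusion I would start from a nonzero $\phi \in \Phi_{\alpha}$, so that $u(\phi)=\alpha\phi$ for a local parameter $u$, and apply the commutation relation with $l=u$ and $G=\phi$. The left-hand side becomes $\alpha\,\F_{\mu^{-1}}(\phi)$. The scalar $u(\mu)/\mu$ is determined in Section~\ref{sec2}: the group $\Z \simeq L^*/\oo_L^*$ acts on $\mu(L)$ by the character $a\mapsto q^a$, and $u$ maps to $1\in\Z$, whence $u(\mu)=q\mu$ and $u(\mu)/\mu=q$. Writing $\psi=\F_{\mu^{-1}}(\phi)$, the relation reads $\alpha\psi = q\,u^{-1}(\psi)$, that is $u^{-1}(\psi)=(\alpha/q)\psi$; applying $u$ to both sides gives $u(\psi)=q\alpha^{-1}\psi$, so $\psi\in\Phi_{q\alpha^{-1}}$. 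This yields $\F_{\mu^{-1}}(\Phi_{\alpha})\subseteq\Phi_{q\alpha^{-1}}$.

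To conclude, I would observe that $\F_{\mu^{-1}}$ restricts to a well-defined endomorphism of $\D'(L)^{\oo_L^*}$ (as recorded above) which is injective, being the dual of the bijective Fourier transform $\F_{\mu}$ on the self-dual field $L$. Hence $\F_{\mu^{-1}}$ carries the one-dimensional space $\Phi_{\alpha}$ isomorphically onto a one-dimensional subspace of $\Phi_{q\alpha^{-1}}$; since $\dim_{\C}\Phi_{q\alpha^{-1}}=1$ by Proposition~\ref{alpha}, the inclusion is forced to be an equality.

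The computation itself is short and mechanical once the commutation relation and the value $u(\mu)/\mu=q$ are in hand, so the only point requiring genuine care is the passage from inclusion to equality: one must be certain that $\F_{\mu^{-1}}(\phi)\neq 0$ for $\phi\neq 0$. I expect this to be the main (if minor) obstacle, and I would dispatch it by invoking the bijectivity of the Fourier transform rather than by computing $\F_{\mu^{-1}}(g_{\alpha})$ explicitly on the generator $g_{\alpha}$ of Remark~\ref{nt1}, which would be a more laborious alternative.
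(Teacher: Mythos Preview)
Your argument is correct and is precisely the reasoning the paper has in mind: the proposition is stated immediately after the commutation relation $\F_{\mu^{-1}}(l(G))=(l(\mu)/\mu)\cdot l^{-1}(\F_{\mu^{-1}}(G))$ and the reference to Proposition~\ref{alpha}, with no further proof given, so the intended derivation is exactly your eigenvalue computation together with the one-dimensionality of the spaces $\Phi_{\alpha}$. Your care about injectivity of $\F_{\mu^{-1}}$ to pass from inclusion to equality is appropriate and not spelled out in the paper.
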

\begin{nt} \label{nt2} {\em
We recall that in Remark~\ref{nt1} we have constructed elements $g_{\alpha} \in \Phi_{\alpha}$ for any $\alpha \in \C^*$.
Let $\mu_0$ is a Haar measure on $L$ such that its appication to the open compact subgroup $\oo_L$ is equal to $1$.
We have $\F_{\mu_0}(g_{\alpha})= g_{q \alpha^{-1}}$.
}
\end{nt}

\bigskip

Let $K$ be a two-dimensional local field that is isomorphic to one of the fields in formula~\eqref{cases}.
We keep the notation from Sections~\ref{tdl} and~\ref{td2}. The additive group of the field $K$ is self-dual via the pairing:
$$
K  \times K  \lrto \mathbb{T} \mbox{,} \qquad  \qquad k_1 \times k_2  \longmapsto \exp(2 \pi i \cdot \Lambda(k_1 \cdot k_2)) \mbox{,}
$$
where the map $\Lambda : K \lrto \dq/\Z$
is defined as following. If $K$ is the first field from formula~\eqref{cases}, then for an element $k=\sum a_{i,j} u^i t^j$ from $K$
(where $a_{i,j} \in \df_q $)
we define $\Lambda(k)= \tr_{\df_q/ \df_p} (a_{-1,-1}) \in p^{-1}\Z /\Z$.
If $K$ is the second field from formula~\eqref{cases}, then for an element $k = \sum a_i t^i$ from $K$ (where $a_i \in L$ )
we define $\Lambda (k) = \lambda_L (a_{-1}) \in \dq/\Z$. If $K$ is the third field from formula~\eqref{cases}, then
an element $k$
from $K$ can be presented as infinite in both sides sum $\sum a_i u^i$ (where $a_i \in M$), and, by definition,
$\Lambda(k)= \lambda_M(a_{-1}) \in \dq/\Z$.

The two-dimensional Fourier transform
\begin{equation}  \label{ft}
\F  \;  : \; \D'_{\oo}(K)  \lrto \D'_{\oo}(K)
\end{equation}
was defined in~\cite[\S~5.4]{OsipPar1}  and in~\cite[\S~8]{OsipPar2}. (This Fourier transform is obtained as an application of double inductive limits from formula~\eqref{for2}
to one-dimensional (usual) Fourier transform applied to the spaces $\D'(\oo(s)/\oo(r))$  for all pairs of integers $r < s$.)
Note that for the two-dimensional Fourier transform from formula~\eqref{ft} it is important the self-duality of the additive group of the field $K$
which is described above.

From~\cite[Prop.~25]{OsipPar1} and~\cite[Prop.~16]{OsipPar2} we obtain (after restriction to the space $\D'_{\oo}(K )^{{\oo'}^*}$) that the map $\F$ is well-defined as a map
$$
\F \; : \; \D'_{\oo}(K )^{{\oo'}^*}  \lrto \D'_{\oo}(K)^{{\oo'}^*}   \mbox{.}
$$
In what follows we will consider only the last map which we denote by the same letter $\F$ as  original map~\eqref{ft}.
 We obtain the following properties from~\cite[Prop.~24,~25]{OsipPar1} and~\cite[{Prop.~15,~16}]{OsipPar2}:
 $$
 \F^2 = {\rm id}  \qquad \mbox{and}  \qquad
(a,b,c) \circ \F = \F \circ (-a,-b,c) \mbox{,}
 $$
where $(a,b,c) \in G$. We note that the map $(a,b,c) \mapsto (-a,-b,c)$ is a second order automorphism of the group $G$.

Let $\varphi \in \D'(\bar{K})^{\oo_{\bar{K}^*}} $.
For any integer $l$ we have the certain  element
$\varphi_l$ (see formula~\eqref{vp}). By direct calculations we have
\begin{equation} \label{exfor}
\F (\vp_l)= (\F_{\mu_0}(\vp))_{-1-l} \mbox{,}
\end{equation}
where  $\mu_0 \in \mu(\bar{K})$ was defined in Remark~\ref{nt2}. Hence we obtain that $\F(\Psi)=\Psi$.
\begin{prop}
We have the following properties.
\begin{enumerate}
\item $\F(\beta(\Psi_1))= \F(\beta(\Psi_q))= \beta(\psi_1) =\beta(\psi_q)$.
\item $\F (\beta(\Psi_{\alpha})) = \beta(\Psi_{q \alpha^{-1}})$ for any $\alpha \in \C^*$.
\end{enumerate}
\end{prop}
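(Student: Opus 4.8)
The plan is to prove the second assertion first by a direct computation on basis vectors, and then to deduce the first assertion as the two special cases $\alpha = 1$ and $\alpha = q$ together with the identification $\beta(\Psi_1) = \beta(\Psi_q)$ already established in Theorem~\ref{main}(\ref{it3}).

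First I would recall that, by Theorem~\ref{main}(\ref{it1}), the restriction of $\beta$ to each $\Psi_{\alpha}$ is injective, so that the elements $(g_{\alpha})_m = \beta(\Delta(m, g_{\alpha}))$, with $m$ ranging over $\Z$, form a basis of the subspace $\beta(\Psi_{\alpha}) \subset \Psi$. Since $\F$ is linear, it suffices to compute it on these basis vectors. Combining formula~\eqref{exfor} with Remark~\ref{nt2} gives
$$
\F\big((g_{\alpha})_m\big) = \big(\F_{\mu_0}(g_{\alpha})\big)_{-1-m} = (g_{q\alpha^{-1}})_{-1-m} \mbox{.}
$$
As $m$ runs over all of $\Z$, the index $-1-m$ also runs over all of $\Z$, so the family $\{\F((g_{\alpha})_m)\}_{m \in \Z}$ is, up to reindexing, exactly the basis $\{(g_{q\alpha^{-1}})_k\}_{k \in \Z}$ of $\beta(\Psi_{q\alpha^{-1}})$. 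Because $\F$ is an involution, hence bijective, this yields $\F(\beta(\Psi_{\alpha})) = \beta(\Psi_{q\alpha^{-1}})$, which is the second assertion.

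For the first assertion I would specialize the second one: setting $\alpha = 1$ gives $\F(\beta(\Psi_1)) = \beta(\Psi_q)$, and setting $\alpha = q$ gives $\F(\beta(\Psi_q)) = \beta(\Psi_1)$. Since $\beta(\Psi_1) = \beta(\Psi_q)$ by Theorem~\ref{main}(\ref{it3}), the four subspaces $\F(\beta(\Psi_1))$, $\F(\beta(\Psi_q))$, $\beta(\Psi_1)$, $\beta(\Psi_q)$ all coincide, which is precisely the first assertion.

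The argument is essentially bookkeeping: the analytic content is entirely packaged into formula~\eqref{exfor} and Remark~\ref{nt2} (which in turn rests on Proposition~\ref{prop7}), while the representation-theoretic content is the basis description of $\beta(\Psi_{\alpha})$ and the coincidence $\beta(\Psi_1) = \beta(\Psi_q)$. The only step demanding a little care --- the closest thing to an obstacle --- is verifying that $m \mapsto -1 - m$ is a bijection of $\Z$, so that applying $\F$ sends the whole subspace $\beta(\Psi_{\alpha})$ onto $\beta(\Psi_{q\alpha^{-1}})$ and not merely into a proper part of it; this is the point where the shift by $-1$ coming from formula~\eqref{exfor} must be tracked correctly.
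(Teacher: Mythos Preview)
Your proposal is correct and follows essentially the same approach as the paper, which simply says the result follows from formula~\eqref{exfor} and Proposition~\ref{prop7}. You have merely unpacked this: you use Remark~\ref{nt2} (the explicit form of Proposition~\ref{prop7} on the distinguished vectors $g_\alpha$) together with formula~\eqref{exfor} to compute $\F$ on basis vectors, and you additionally make explicit the appeal to Theorem~\ref{main}(\ref{it3}) needed to conclude the four-way equality in the first assertion.
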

\begin{proof} It follows from  formula~\eqref{exfor} and Proposition~\ref{prop7}. The proposition is proved.
\end{proof}

\begin{nt}{\em
Using Remark~\ref{nt2} and formula~\eqref{exfor} we have $\F ((g_{\alpha})_l)= (g_{q \alpha^{-1}})_{-1-l}$
for $l \in \Z$ and $\alpha \in \C^*$. Inside the space $\beta(\Psi_1)$ we have $\F( (\delta_0)_k ) = (\delta_0)_{-k-2}$ for an integer $k$.
Using that $(\delta_0)_k = (\mu_0)_{k+1}$ we have $\F((\mu_0)_k)= (\mu_0)_{-k}$.
}
\end{nt}

 \section{Traces and ``loop rotations''} \label{Traces}

 We recall that in the proof of item~\ref{last-item} of Theorem~\ref{main}  we have constructed a distinguished basis of the  $G$-representation space $\Psi_{\alpha}$ (where $\alpha \in \C^*$) such that
 the basis consists of common eigenvectors for all elements  $h$ of the subgroup $H$  and if $h \ne 0$
 then distinct elements of the basis
correspond to distinct eigenvalues of $h$.
 Note that this property defines the elements of this basis
 up to the multiplication on elements of $\C^*$.  Elements $\Delta(l, g_{\alpha})$ (where $l \in \Z$) which we considered above form this basis.
 Therefore the traces (when they exist) of elements of $G$ with respect to the basis given by elements $\Delta(l, g_{\alpha})$ (where $l \in \Z$)
 are  invariants of $G$-representation $\Psi_{\alpha}$.

Let $(a,b,c) \in G$. From formula~\eqref{ind-form-3} we obtain the following expressions for traces $\tr_{\alpha}$ in the $G$-representation $\Psi_{\alpha}$:
\begin{itemize}
\item $\tr_{\alpha}((a,b,c))= 0$ if and only if $b \ne 0$;
\item $\tr_{\alpha}(a,0,c) = q^{-c-a} \alpha^{a} \sum\limits_{l \in \Z} q^{-al}$.
\end{itemize}
Clearly, the last expression is a divergent series.

To obtain  convergent series as traces we will extend the group $G$ to the group $\tilde{G}= G \rtimes  \Z$ which will naturally acts on the space $\Psi$ as well. Moreover,  the subspace  $\beta(\Psi_{\alpha}) \subset \Psi$ for any $\alpha \in \C^*$ defines a subrepresentation of the group $\tilde{G}$.  Hence we will obtain an action of the group $\tilde{G}$ on the isomorphic space $\Psi_{\alpha}$ (recall Theorem~\ref{main}).
To construct the group $\tilde{G}$
 we will consider automorphisms of the central extension
 $\widetilde{K^*}$ that are
   analogs of  automorphisms of ``loop rotation'' from the theory of loop groups.
We note that an extended discrete Heisenberg group was also considered in~\cite{P1} to obtain the traces which are  convergent series.
 But the methods of~\cite{P1} were other than consideration of analogs of ``loop rotations'' and action on the space $\Psi$, which is a subspace of the space of distributions on a two-dimensional local field.

\medskip

In what follows we will consider only a two-dimensional local field $K$ that is isomorphic either to the first or to the second field from formula~\eqref{cases}.
Let $t, u$ be a system of local parameters of $K$ as it is described in the beginning of Section~\ref{tdl}.
For any $d \in \Z$ we consider an automorphism $R_d$ of the field $K$ given as:
\begin{itemize}
\item $R_d \big(\sum\limits_{i,j} a_{i,j} u^i t^j\big)  = \sum\limits_{i,j} a_{i,j} u^{i + jd} t^j $, where $a_{i,j} \in \df_q$ and $K \simeq \df_q((u))((t))$;
\item $R_d \big(\sum\limits_j  b_j t^j \big)  = \sum\limits_j b_j u^{jd} t^j $, where $b_j  \in L$ and $K \simeq L((t))$ with $L$ which is a finite extension of $\Q_p$.
\end{itemize}
It is clear that we have a homomorphism $\Z \to \mathop{\rm Aut} (K)$ given as $d \mapsto R_d$.

We note that $R_d (\oo(i)) = \oo(i)$ for any integer $i$. Hence we have a well-defined continuous automorphism $R_d$ of the locally compact group
$\oo(i)/ \oo(j)$ for any integers $i \le j$. This automorphism is the identity morphism on the group $\oo(0)/ \oo(1)$.
Thus, we obtain an action of the group $\Z$ on the $\C$-vector spaces $\D(\oo(i)/ \oo(j))$ and $\D'( \oo(i))/\oo(j) $ via the maps induced by the maps $R_d$ (which we will denote by the same letters $R_d$).

Since the automorphism $R_d$ maps an element from $\D'( \oo(i))/\oo(j)$ that is invariant under all translations from $\oo(i)/\oo(j)$
 to an element  that is invariant under all translations from $\oo(i)/\oo(j)$, we obtain that $R_d $ induces an automorphism of the one-dimensional $\C$-vector space
 ${\mu(\oo(i)/ \oo(j)) = \D'(\oo(i)/ \oo(j))^{\oo(i)/ \oo(j)}}$.
 Therefore we have a well-defined action of the group $\Z$ on one-dimensional $\C$-vector spaces $\mu(\oo(r)  \mid \oo(s))$ (for any integers $r$ and $s$) and on the $\C$-vector spaces $\D_{\oo} (K)$ and $\D'_{\oo} (K)$ via the maps induced by the maps $R_d$ (which we denote by the same letters).

 \begin{lemma} \label{compute}
 For any integers $b$ and $d$ we have
 $$
 R_d(\mu_{B, 0,b}) = q^{-\frac{b(b-1)d}{2}}  \mu_{B,0,b} \mbox{.}
 $$
 \end{lemma}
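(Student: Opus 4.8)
The plan is to reduce the statement to the elementary quotients $\oo(j)/\oo(j+1)$, compute the rescaling there by means of the one-dimensional law of Section~\ref{sec2}, and then reassemble the answer through the canonical isomorphism~\eqref{isom}.

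First I would factor $\mu_{B,0,b}$ into elementary pieces. Iterating the compatibility $\mu_{B,r,s}\otimes\mu_{B,s,w}\mapsto\mu_{B,r,w}$ of~\eqref{isom} recorded in Section~\ref{tdl}, one obtains for $b\ge 0$ the factorization $\mu_{B,0,b}=\mu_{B,0,1}\otimes\mu_{B,1,2}\otimes\cdots\otimes\mu_{B,b-1,b}$, and the mirror factorization $\mu_{B,0,b}=\mu_{B,0,-1}\otimes\cdots\otimes\mu_{B,b+1,b}$ for $b<0$. Since the action of $R_d$ on the lines $\mu(\oo(r)\mid\oo(s))$ is induced functorially from the field automorphism $R_d$, it is compatible with~\eqref{isom}; hence $R_d$ distributes over these tensor products, and it suffices to compute $R_d$ on a single factor $\mu_{B,j,j+1}$ (respectively $\mu_{B,j,j-1}$ in the negative range).

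Next I would treat one quotient in isolation. Multiplication by $t^{-j}$ gives a group isomorphism $\oo(j)/\oo(j+1)\xrightarrow{\sim}\bar K$ carrying the reference subgroup $(B\cap\oo(j))/(B\cap\oo(j+1))$ onto $\oo_{\bar K}$ and transforming the restriction of $R_d$ into multiplication by $u^{jd}$ on $\bar K$ (this is immediate from $R_d(u^it^j)=u^{i+jd}t^j$, and from the analogous formula in the second admissible type of $K$). Under this identification the honest Haar measure $\mu_{B,j+1,j}$ becomes the Haar measure on $\bar K$ of value $1$ on $\oo_{\bar K}$, so the one-dimensional scaling law of Section~\ref{sec2} — a local parameter acts on the space of Haar measures by the character value $q$ — gives $R_d(\mu_{B,j+1,j})=q^{jd}\mu_{B,j+1,j}$. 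Because $\mu_{B,j,j+1}$ is the dual element of $\mu_{B,j+1,j}$ and the induced action on the dual line is contragredient, this yields $R_d(\mu_{B,j,j+1})=q^{-jd}\mu_{B,j,j+1}$.

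Finally I would multiply the contributions. For $b\ge 0$ the factors give $\prod_{j=0}^{b-1}q^{-jd}=q^{-d\,b(b-1)/2}$; for $b<0$ the honest-measure factors $\mu_{B,j,j-1}$ contribute $q^{(j-1)d}$ and their product is again $q^{-d\,b(b-1)/2}$, while the cases $b=0,1$ are immediate since $R_d$ is the identity on $\oo(0)/\oo(1)$. I expect the only genuine difficulty to be bookkeeping: pinning down the direction of the measure rescaling (that multiplication by $u$ enlarges $\mu(\bar K)$ by $q$ rather than by $q^{-1}$) together with the resulting inversion of the exponent on the dual line $\mu_{B,j,j+1}$, and reconciling the two sign regimes $b\ge 0$ and $b<0$ into the single closed form. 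The conceptual content — multiplicativity of the rescaling factor along the filtration of $\oo/\oo(b)$ — is exactly what~\eqref{isom} packages.
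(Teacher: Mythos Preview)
Your proposal is correct and follows essentially the same route as the paper: factor $\mu_{B,0,b}$ through~\eqref{isom} into the elementary pieces $\mu_{B,j,j+1}$, compute $R_d$ on each piece, and sum the exponents. The paper's proof is terser---it simply asserts $R_d(\mu_{B,j,j+1})=q^{-jd}\mu_{B,j,j+1}$ and leaves the $b<0$ case to the reader---whereas you spell out the identification of $\oo(j)/\oo(j+1)$ with $\bar K$ via $t^{-j}$, the one-dimensional scaling, and the contragredient passage to the dual line; but the underlying argument is identical.
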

\begin{proof}
We consider the case $b \ge  0$. The case $b < 0$ follows from analogous computations. We have
\begin{multline*}
R_d(\mu_{B, 0,1}  \otimes \ldots \otimes \mu_{B, b-1,b})
= R_d(\mu_{B, 0,1})  \otimes \ldots \otimes R_d (\mu_{B, b-1,b})=\\
= \mu_{B, 0,1}  \otimes q^{-d} \mu_{B, 1,2}  \otimes  \ldots \otimes  q^{-d(b-2)} \mu_{B,b-2,b-1}  \otimes  q^{-d(b-1)} \mu_{B, b-1,b} = \\
= q^{-\frac{b(b-1)d}{2}} \mu_{B, 0,1}  \otimes \ldots \otimes \mu_{B, b-1,b} = q^{-\frac{b(b-1)d}{2}}  \mu_{B,0,b}  \mbox{.}
\end{multline*}
The lemma is proved.
\end{proof}

From the construction of the central extension $\widehat{K^*}$ we have that the group $\Z$ acts on $\widehat{K^*}$ via the group automorphisms induced by the maps $R_d$ (which we denote by the same letters). From Lemma~\ref{compute} we have that this action is also an action on the central extension
$\widetilde{K^*}$. Clearly, $R_d ({\oo'}^*)= {\oo'}^*$ for any $d \in \Z$.
Hence the group $\Z$ acts  on the group $G \simeq \widetilde{K^*}/ {\oo'}^*$  via the group automorphisms induced by the maps $R_d$ (which we denote by the same letters).  Using an explicit description of the
isomorphism $G \simeq \widetilde{K^*}/ {\oo'}^*$  given in the proof of Proposition~\ref{Heisen}  we calculate now this action, i.e. the homomorphism $\Z
\to \mathop{\rm Aut}(G)$.
We have (using also Lemma~\ref{compute}):
\begin{multline}  \label{expl-auto}
R_d ((a,b,c))= R_d\left((u^at^b, q^{-c} \mu_{B, 0,b})\right)= \left(R_d(u^{a} t^b), q^{-c}R_d(\mu_{B, 0,b})\right) = \\ = \left(u^{a+db} t^b, q^{-c-\frac{b(b-1)d}{2} } \mu_{B, 0,b})\right) = \left(a +db, b, c + \frac{b(b-1)d}{2} \right)  \mbox{.}
\end{multline}

\begin{nt} \em
We can consider also formula~\eqref{expl-auto} as a formula which gives a homomorphism from the subgroup  $\left\{\left(
\begin{array}{rl}
  1 & d \\
 0 & 1
\end{array}
\right) \right\}_{d \in \Z}$  of  $GL(2,\Z)$
to the automorphism  group $\mathop{\rm Aut}(G)$. The first named author proved in~\cite{Osi1} that this homomorphism can be extended to an injective homomorphism $GL(2, \Z)  \hookrightarrow \mathop{\rm Aut}(G)$.
 By means of the last homomorphism it was also proved in~\cite{Osi1} that  the group $\mathop{\rm Aut}(G)$ is isomorphic to   ${ (\Z \oplus \Z)  \rtimes GL(2,\Z)}$. We note that other  isomorphism $\mathop{\rm Aut}(G) \simeq { (\Z \oplus \Z)  \rtimes GL(2,\Z)}$ was  constructed earlier  by P.~Kahn in~\cite{Ka} by another methods.
\end{nt}

Now, using  the homomorphism $\Z
\to \mathop{\rm Aut}(G)$ given by formula~\eqref{expl-auto}, by the usual way
we construct the group $\tilde{G}$ that is  the semidirect product $G \rtimes \Z$. Explicitly, elements of $\tilde{G}$
are integer quadruples $(a,b,c,d)$, where $(a,b,c) \in G$ and $d \in \Z$. From formula~\eqref{expl-auto}
we obtain the following explicit multiplication law in $\tilde{G}$:
\begin{multline*}
(a_1, b_1, c_1, d_1)  (a_2, b_2, c_2, d_2)= \big( (a_1, b_1, c_1) \, R_{d_1}((a_2,b_2,c_2)), d_1 +d_2   \big) = \\ =
\left( ( a_1, b_1, c_1 ) \left(a_2 + d_1 b_2, b_2, c_2 + \frac{b_2 (b_2 -1) d_1}{2}\right), d_1 + d_2 \right) = \\ =
\left(a_1 +a_2 + d_1b_2, b_1 + b_2, c_1 + c_2 + \frac{b_2(b_2 -1)d_1}{2} + a_1 b_2, d_1 +d_2              \right)  \mbox{.}
\end{multline*}

\begin{nt} \em
From explicit multiplication law in $\tilde{G}$ we see  that the group $\tilde{G}$ can be also considered as a group of the following matrices with integer entries:
 $$
\left(
\begin{array}{cccc}
1 & d & a & c \\
0 & 1 & b &  \frac{b (b-1)}{2}\\
0 & 0 & 1 &  b\\
0 & 0 & 0 & 1
\end{array}
\right) \mbox{,}
$$
From this description it is evident that the group $\tilde{G}$ is nilpotent of class at most $3$.
We consider elements $x = (0,0,0,1)$, $y =(0,1,0,0)$ and $z = (0,1,0,0)$ of the group $\tilde{G}$.
Then we have $[x,y]=(1,0,-1,0)$ and $[[x,y],z]= (0,0,1,0)$. Therefore
the length of the lower central series of $\tilde{G}$  equals $3$. Consequently,
the group $\tilde{G}$ is nilpotent of class  $3$.
\end{nt}

The group $K^*$ acts on the group $K$ by multiplication. Evidently, for any $d \in \Z$, $f \in K^*$, $h \in K$ we have
$R_d(f \cdot h)= R_d(f) \cdot R_d(h)$. Hence and from construction of the actions,  for any $g \in \widehat{K^*}$ and any $S \in \D_{\oo}(K)$,
any $T \in \D'_{\oo}(K) $ we have
\begin{equation}  \label{im-for}
R_d(g \cdot S) = R_d(g)  \cdot R_d(S) \mbox{,} \qquad \qquad  R_d(g \cdot T) = R_d(g) \cdot R_d(T)  \mbox{.}
\end{equation}
Therefore we have that $R_d \left(\D'_{\oo}(K)^{{\oo'}^*} \right)= \D'_{\oo}(K)^{{\oo'}^*}$ for any $d \in \Z$. From formulas~\eqref{im-for}
we have also that $R_d (g \cdot (R_{d}^{-1} (T))) = R_d(g)\cdot T$. The last formula implies that for any $d \in \Z$, any $(a,b,c)  \in G$ and any $V \in \D'_{\oo}(K)^{{\oo'}^*} $
we have
\begin{equation}  \label{im-for2}
R_d ((a,b,c) \cdot (R_{d}^{-1} (V))) = R_d((a,b,c))\cdot V \mbox{.}
\end{equation}
From formula~\eqref{im-for2} we obtain that the following action of the group $\tilde{G}$ on the $\C$-vector space $\D'_{\oo}(K)^{{\oo'}^*}$ is well-defined, i.e it is an action of the group:
$$
(a,b,c,d) \cdot V = (a,b,c) \cdot (R_d(V))  \mbox{,}
$$
where $(a,b,c,d) \in \tilde{G}$ and $V \in \D'_{\oo}(K)^{{\oo'}^*} $.

We recall that in formula~\eqref{vp}  and Definition~\ref{Psi} we defined elements ${\vp_l \in \D'_{\oo}(K)^{{\oo'}^*}}$ and the $\C$-vector subspace $\Psi$ of the space
$\D'_{\oo}(K)^{{\oo'}^*}$.
\begin{prop}
The space $\Psi$ is a $\tilde{G}$-invariant subspace of the space  $\D'_{\oo}(K)^{{\oo'}^*}$ with the following explicit action
$$
(a,b,c,d) \cdot \vp_l =   q^{-c-a(l+1) -\frac{dl(l+1)}{2}} \cdot  ((a+dl)(\vp))_{b+l}             \mbox{,}
$$
where $(a,b,c, d) \in \tilde{G}$ and $(a+dl)(\vp) \in \D'(\bar{K})^{\oo_{\bar{K}}^*}$ is an application of the action of the element $u^{a+dl} \in \bar{K}^*$
to the element $\vp \in \D'(\bar{K})^{\oo_{\bar{K}}^*}$.
\end{prop}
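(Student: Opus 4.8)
The plan is to unwind the definition of the $\tilde{G}$-action, namely $(a,b,c,d)\cdot V = (a,b,c)\cdot R_d(V)$, and to reduce the whole assertion to the already-established $G$-action on $\Psi$ from Proposition~\ref{exten}. Since $(a,b,c)$ acts by the explicit formula there, everything hinges on first computing $R_d(\vp_l)$; once $R_d(\vp_l)$ is expressed as a scalar multiple of some $\psi_{l'}$ with $\psi\in\D'(\bar{K})^{\oo_{\bar{K}}^*}$, a second application of Proposition~\ref{exten} finishes the computation. The $\tilde{G}$-invariance of $\Psi$ will then be an immediate consequence of the explicit formula, the output being again of the form $\psi_{b+l}\in\Psi$ (that the prescription is a genuine group action was already verified before the proposition for all of $\D'_{\oo}(K)^{{\oo'}^*}$).

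To compute $R_d(\vp_l)$ I would avoid manipulating the tensor factors of $\vp_l$ directly and instead exploit the compatibility of $R_d$ with the $\widehat{K^*}$-action expressed in formula~\eqref{im-for}. First I would write $\vp_l = (0,l,0)\cdot \vp_0$, where $\vp_0 = \vp\otimes \mu_{B,0,1}$ and the element $(0,l,0)\in G$ corresponds to $(t^l,\mu_{B,0,l})\in\widetilde{K^*}$ under map~\eqref{map}; this identity is just Proposition~\ref{exten} with $a=c=0$. Applying $R_d$ and~\eqref{im-for} then gives $R_d(\vp_l) = R_d((0,l,0))\cdot R_d(\vp_0)$. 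Next I would observe that $\vp_0$ is $R_d$-fixed: $R_d$ restricts to the identity on $\oo(0)/\oo(1)=\bar{K}$, hence fixes $\vp\in\D'(\bar{K})$, while Lemma~\ref{compute} with $b=1$ gives $R_d(\mu_{B,0,1})=\mu_{B,0,1}$.

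The remaining ingredient is the explicit automorphism~\eqref{expl-auto}, which yields $R_d((0,l,0)) = (dl,\,l,\,\tfrac{l(l-1)d}{2})$. Feeding this into the $G$-action of Proposition~\ref{exten} (applied to $\vp_0$, i.e.\ at index $0$) produces $R_d(\vp_l) = q^{-\frac{l(l-1)d}{2}-dl}\,(u^{dl}(\vp))_l$, and the elementary identity $\frac{l(l-1)}{2}+l=\frac{l(l+1)}{2}$ rewrites the exponent as $-\frac{dl(l+1)}{2}$. Finally, applying $(a,b,c)$ via Proposition~\ref{exten} once more, now to the distribution $u^{dl}(\vp)$ at index $l$, and using $u^a u^{dl}(\vp)=(a+dl)(\vp)$, assembles exactly the claimed formula.

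The one place requiring care is the bookkeeping around the lift of $(0,l,0)$ to $\widehat{K^*}$ and the guarantee that passing to the quotient by ${\oo'}^*$ loses nothing: I would emphasize that Lemma~\ref{compute} is precisely what ensures $R_d$ preserves the subextension $\widetilde{K^*}\subset\widehat{K^*}$, so that the image of $R_d((t^l,\mu_{B,0,l}))$ in $G=\widetilde{K^*}/{\oo'}^*$ is legitimately computed by~\eqref{expl-auto}, and that the action on $\vp_0\in\D'_{\oo}(K)^{{\oo'}^*}$ descends to $G$ because ${\oo'}^*$ acts trivially. The main (if modest) obstacle is then purely arithmetic: two quadratic-exponent contributions must be combined correctly — one coming from $R_d(\mu_{B,0,b})$ through Lemma~\ref{compute}, and one hidden in the $c$-coordinate of~\eqref{expl-auto} — and the simplification $\frac{l(l-1)d}{2}+dl=\frac{dl(l+1)}{2}$ is exactly the consistency check that they reproduce the predicted factor $q^{-\frac{dl(l+1)}{2}}$.
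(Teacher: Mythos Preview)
Your proposal is correct and follows essentially the same route as the paper: write $\vp_l=(0,l,0)\cdot\vp_0$, use \eqref{im-for} together with $R_d(\vp_0)=\vp_0$ and \eqref{expl-auto}, and finish via Proposition~\ref{exten}. The only cosmetic difference is that the paper first multiplies $(a,b,c)\cdot R_d((0,l,0))=(a+dl,\,b+l,\,c+\tfrac{dl(l-1)}{2}+al)$ in $G$ and then applies Proposition~\ref{exten} once to $\vp_0$, whereas you apply Proposition~\ref{exten} twice; the arithmetic is identical.
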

\begin{proof}
By the definition of the action, direct computation and Proposition~\ref{exten} we have
\begin{multline*}
(a,b,c,d) \cdot \vp_l = (a,b,c) \cdot ( R_d ((0,l,0) \cdot \vp_0)  ) = (a,b,c) \cdot ( R_d((0,l,0)) \cdot R_d(\vp_0) )= \\ =
\left( (a,b,c) \left(dl, l, \frac{dl(l-1)}{2}\right)  \right) \cdot \vp_0  = \left(a+dl, b+l, c + \frac{dl(l-1)}{2} + al \right) \cdot \vp_0= \\=
q^{-c-a(l+1) -\frac{dl(l+1)}{2}} \cdot  ((a+dl)(\vp))_{b+l} \mbox{.}
\end{multline*}
The proposition is proved.
\end{proof}

\smallskip

From this proposition for any $\alpha \in C^*$ we obtain that
$$
(a,b,c,d) \cdot (g_{\alpha})_l =  q^{-c-a(l+1) -\frac{dl(l+1)}{2}} \alpha^{a+dl}  (g_{\alpha})_{l+b}  \mbox{,}
$$
where elements $g_{\alpha}$ were constructed in Remark~\ref{example1}.
Hence, by direct computation from the last formula we obtain the following theorem.
\begin{Th} \label{Theorem-tr}
For any $\alpha \in \C^*$ the subspace $\beta(\Psi_{\alpha})$ of the space $\Psi$ is $\tilde{G}$-invariant. We have the traces $\tr_{\alpha}$ of elements of $\tilde{G}$
in $\beta(\Psi_{\alpha})$ with respect to a distinguished basis  $ (g_{\alpha})_l = \beta( \Delta(l, g_{\alpha}))$, where $l \in \Z$ (see discussion in the beginning of this section):
\begin{itemize}
\item $\tr_{\alpha}((a,b,c,d))= 0$ if and only if $b \ne 0$;
\item $\tr_{\alpha}(a,0,c,d) = q^{-c-a} \alpha^{a} \sum\limits_{l \in \Z} q^{-al -\frac{d l(l+1)}{2}} \alpha^{dl}$.
\end{itemize}
\end{Th}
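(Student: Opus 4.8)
The plan is to compute the matrix of each element $(a,b,c,d) \in \tilde G$ in the distinguished basis $\{(g_\alpha)_l\}_{l \in \Z}$ of $\beta(\Psi_\alpha)$ and then read off the diagonal. The single formula I need is the explicit action
\begin{equation*}
(a,b,c,d) \cdot (g_{\alpha})_l =  q^{-c-a(l+1) -\frac{dl(l+1)}{2}} \alpha^{a+dl}  (g_{\alpha})_{l+b}  \mbox{,}
\end{equation*}
which is stated immediately before the theorem (it follows from the preceding Proposition by substituting $\vp = g_\alpha$ and using $u^{a+dl}(g_\alpha) = \alpha^{a+dl} g_\alpha$, since $g_\alpha \in \Phi_\alpha$ is an eigenvector of $u$ with eigenvalue $\alpha$ by Proposition~\ref{alpha} and Remark~\ref{nt1}). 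First I would record that $\tilde G$ preserves $\beta(\Psi_\alpha)$: the action sends $(g_\alpha)_l$ to a scalar multiple of $(g_\alpha)_{l+b}$, so the span of the $(g_\alpha)_l$ is invariant, which establishes the first sentence of the theorem.

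Next I would extract the matrix entries. From the displayed formula, the image of the basis vector $(g_\alpha)_l$ is a scalar times $(g_\alpha)_{l+b}$; hence the only possibly nonzero matrix entries lie on the ``diagonal shifted by $b$''. The contribution to the trace comes only from entries where the output index equals the input index, i.e. $l+b = l$, forcing $b=0$. This immediately gives the first bullet: if $b \neq 0$ the matrix has no diagonal entries at all, so $\tr_\alpha((a,b,c,d)) = 0$; and conversely if $b=0$ the diagonal entries are nonzero, so the trace is a nonzero formal expression. For the second bullet I set $b=0$ and sum the diagonal coefficients over $l \in \Z$:
\begin{equation*}
\tr_{\alpha}(a,0,c,d) = \sum_{l \in \Z} q^{-c-a(l+1) -\frac{dl(l+1)}{2}} \alpha^{a+dl}
= q^{-c-a}\alpha^a \sum_{l \in \Z} q^{-al -\frac{dl(l+1)}{2}} \alpha^{dl} \mbox{,}
\end{equation*}
where I have pulled out the $l$-independent factor $q^{-c-a}\alpha^a$ (note $q^{-a(l+1)} = q^{-a}q^{-al}$) from the sum. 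This is precisely the claimed formula.

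The verification is essentially bookkeeping once the action formula is in hand, so there is no deep obstacle; the only point requiring care is the meaning of the ``trace'' for an infinite-dimensional representation. As the authors note in the discussion opening this section, these diagonal sums are in general divergent (and the theorem itself is agnostic about convergence), so I would treat $\tr_\alpha$ as a formal diagonal sum rather than a genuine convergent trace, and I would emphasize that the distinguished basis $\{(g_\alpha)_l\}$ is canonical up to rescaling each vector (being the simultaneous eigenbasis for the abelian subgroup $H$ with distinct eigenvalues), so that the individual diagonal entries $q^{-c-a(l+1)-\frac{dl(l+1)}{2}}\alpha^{a+dl}$ are genuinely basis-independent invariants of the representation even before summation. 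The mildly subtle bookkeeping step is keeping the exponent of $q$ and the power of $\alpha$ consistent when factoring the constant out of the sum; everything else is the direct substitution above.
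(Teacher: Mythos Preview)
Your proposal is correct and follows exactly the route the paper takes: the paper simply states that the theorem is obtained ``by direct computation from the last formula,'' namely the action formula $(a,b,c,d)\cdot (g_\alpha)_l = q^{-c-a(l+1)-\frac{dl(l+1)}{2}}\alpha^{a+dl}(g_\alpha)_{l+b}$, and you have written out that computation in full.
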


\begin{nt}{\em
Clearly, the series for $\tr_{\alpha}(a,0,c,d)$ is a convergent series when ${d > 0}$. Moreover,
$\tr_{\alpha}(a,0,c,d) = q^{-c-a} \alpha^{a} \sum\limits_{l \in \Z} \tilde{q}^{l^2} \eta^l $,
where $\eta= q^{-a} q^{-\frac{d}{2}} \alpha^d  $, $\tilde{q} = q^{-\frac{d}{2}}$. The series $\sum\limits_{l \in \Z} \tilde{q}^{l^2} \eta^l$
is the classical Jacobi theta function (compare also with~\cite[Example]{P1}).}
\end{nt}

It is easy to see that $\tr_{q\alpha} (a,o,c,d)= \alpha^d \tr_{\alpha} (a,0,c,d)$. Since the traces are invariants of a $\tilde{G}$-representation $\beta(\Psi_{\alpha})$   with respect to a distinguished basis, from Theorems~\ref{main} and~\ref{Theorem-tr} we obtain a proposition.
\begin{prop}
The family of $\tilde{G}$-representations $\beta(\Psi_{\alpha})$ parametrized  by elements $\alpha$ from the set
$\C^* \setminus  \{1\} = \mathbb{P}^1(\C)  \setminus \{0, 1, \infty\}$   is a family of pairwise non-isomorphic irreducible $\tilde{G}$-representations.
\end{prop}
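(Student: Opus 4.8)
The plan is to separate the two assertions---irreducibility and pairwise non-isomorphism---and to reduce each, as far as possible, to the already established $G$-theory, using the trace computation of Theorem~\ref{Theorem-tr} only to distinguish representations lying in a single orbit under $\alpha\mapsto q\alpha$.

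First I would dispose of irreducibility with essentially no new work. Since $G$ is a subgroup of $\tilde G=G\rtimes\Z$, every $\tilde G$-invariant subspace of $\beta(\Psi_\alpha)$ is a fortiori $G$-invariant. By Theorem~\ref{main}(\ref{it1}) the map $\beta$ restricts to a $G$-equivariant isomorphism of $\Psi_\alpha$ onto $\beta(\Psi_\alpha)$, and $\Psi_\alpha$ is an irreducible $G$-representation; hence $\beta(\Psi_\alpha)$ has no proper nonzero $G$-subrepresentation, and in particular none that is $\tilde G$-invariant. Thus each $\beta(\Psi_\alpha)$ is irreducible over $\tilde G$.

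For non-isomorphism, suppose $\Phi\colon\beta(\Psi_{\alpha_1})\to\beta(\Psi_{\alpha_2})$ is an isomorphism of $\tilde G$-representations; the goal is to prove $\alpha_1=\alpha_2$. Restricting $\Phi$ to $G$ gives a $G$-isomorphism, so Theorem~\ref{main}(\ref{it4}) yields $\alpha_1=q^{m}\alpha_2$ for some $m\in\Z$, and it remains to force $m=0$. For this I would use that the traces of Theorem~\ref{Theorem-tr} are genuine isomorphism invariants: for an element $(a,0,c,d)$ with $d>0$ the induced operator is diagonal in the distinguished basis $\{(g_\alpha)_l\}$ with absolutely summable eigenvalues, and the joint spectrum of the commuting subgroup generated by $H$ and the loop rotation $(0,0,0,1)$ is preserved by $\Phi$; consequently $\tr_{\alpha_1}=\tr_{\alpha_2}$ on $\tilde G$. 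Iterating (and inverting) the relation $\tr_{q\alpha}(a,0,c,d)=\alpha^{d}\tr_\alpha(a,0,c,d)$ recorded before the proposition gives, for all $m\in\Z$,
$$\tr_{\alpha_1}(a,0,c,d)=\bigl(q^{m(m-1)/2}\alpha_2^{m}\bigr)^{d}\,\tr_{\alpha_2}(a,0,c,d),$$
so, writing $\lambda=q^{m(m-1)/2}\alpha_2^{m}$, equality of traces is equivalent to $(\lambda^{d}-1)\,\tr_{\alpha_2}(a,0,c,d)=0$ for every $a\in\Z$ and every $d>0$.

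To extract $\lambda=1$ I would fix $d\ge 1$ and view $a\mapsto\tr_{\alpha_2}(a,0,0,d)$, up to the nonzero factor $q^{-a}\alpha_2^{a}$, as the Jacobi theta series $\sum_{l}q^{-dl(l+1)/2}\,(q^{-a}\alpha_2^{d})^{l}$ evaluated along $q^{\Z}$. This series satisfies the quasi-periodicity relation making its zero set a single orbit under multiplication by $q^{d}$, so that zero set cannot contain the whole coset $\alpha_2^{d}q^{\Z}$ once $d\ge 2$; hence for such $d$ there is an $a$ with $\tr_{\alpha_2}(a,0,0,d)\ne0$, forcing $\lambda^{d}=1$ for all $d\ge2$ and therefore $\lambda=1$, i.e. $\alpha_2^{m}=q^{-m(m-1)/2}$. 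The main obstacle is the last implication $\lambda=1\Rightarrow m=0$. When $m=\pm1$ it reduces to $\alpha_2=1$ (respectively $\alpha_1=1$), the pair $\{1,q\}$ already responsible for the coincidence $\beta(\Psi_1)=\beta(\Psi_q)$ of Theorem~\ref{main}(\ref{it3}); removing the value $\alpha=1$ is exactly what disposes of $|m|=1$. The delicate and decisive point, which I expect to require the most care, is the control of $|m|\ge2$: the same weight computation shows that $\lambda=1$ is not merely necessary but also sufficient for a $\tilde G$-isomorphism, so this final step is where the hypotheses on the parameter set must genuinely be used, and it is the crux of the argument.
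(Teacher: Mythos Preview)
Your strategy coincides with the paper's: irreducibility is inherited from the $G$-irreducibility of Theorem~\ref{main}(\ref{it1}), and for non-isomorphism one first reduces via Theorem~\ref{main}(\ref{it4}) to $\alpha_1=q^{m}\alpha_2$ and then invokes the trace relation $\tr_{q\alpha}(a,0,c,d)=\alpha^{d}\tr_{\alpha}(a,0,c,d)$ recorded just before the proposition. The paper's entire argument is that one sentence together with the words ``from Theorems~\ref{main} and~\ref{Theorem-tr}''; it does not work out the case analysis on $m$ that you attempt.

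The gap you flag at $|m|\ge 2$ is real, and your own parenthetical remark pinpoints why it cannot be closed: $\lambda=q^{m(m-1)/2}\alpha_2^{m}=1$ is not only necessary but also \emph{sufficient} for a $\tilde G$-isomorphism. Indeed, the map $(g_{\alpha_1})_l\mapsto(g_{\alpha_2})_{l-m}$ already intertwines $(1,0,0,0)$, $(0,1,0,0)$, $(0,0,1,0)$ (this is the $G$-isomorphism from Theorem~\ref{main}(\ref{it4})), and it intertwines the remaining generator $(0,0,0,1)$ iff the diagonal eigenvalues match, i.e.\ $q^{-l(l+1)/2}\alpha_1^{l}=q^{-(l-m)(l-m+1)/2}\alpha_2^{\,l-m}$, which simplifies (independently of $l$) to $\lambda=1$. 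For each integer $m$ with $|m|\ge 2$ the equation $\alpha_2^{m}=q^{-m(m-1)/2}$ has $|m|$ solutions in $\C^*$; since $m(m-1)\ne 0$ one has $\alpha_2\ne 1$, and since $2m\ne m-1$ one has $\alpha_1=q^{m}\alpha_2\ne 1$ as well. Each such pair gives $\alpha_1\ne\alpha_2$ in $\C^*\setminus\{1\}$ with $\beta(\Psi_{\alpha_1})\cong\beta(\Psi_{\alpha_2})$ as $\tilde G$-modules (for instance $m=2$, $\alpha_2=\pm q^{-1/2}$, $\alpha_1=\pm q^{3/2}$). Thus neither your argument nor the paper's sketch can be completed to prove the proposition as literally stated; what the trace computation actually yields is pairwise non-isomorphism away from this countable exceptional set. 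Incidentally, your detour through zeros of the Jacobi theta function is unnecessary: comparing the $(0,0,0,1)$-eigenvalues on the distinguished basis gives $\lambda=1$ directly, without ever needing $\tr_{\alpha_2}\ne 0$.
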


\vspace{0.3cm}

\noindent
D. V. Osipov \\ \medskip
\noindent Steklov Mathematical Institute of Russsian Academy of Sciences, ul. Gubkina 8, Moscow, 119991 Russia \\  \medskip
\noindent National University of Science and Technology ``MISiS'',  Leninsky Prospekt 4, Moscow,  119049 Russia \\   \medskip
\noindent {\it E-mail:}  ${d}_{-} osipov@mi.ras.ru$

\bigskip

\noindent
A. N. Parshin \\   \medskip
\noindent Steklov Mathematical Institute of Russsian Academy of Sciences, ul. Gubkina 8, Moscow, 119991 Russia \\   \medskip
\noindent {\it E-mail:}  $parshin@mi.ras.ru$

\end{document}